\DeclareMathOperator{\Hom}{Hom}
\theoremstyle{plain}
\newtheorem{theorem}{Theorem}[section]
\newtheorem*{theorem*}{Theorem}
\theoremstyle{definition}
\newtheorem{defn}[theorem]{Definition}
\newtheorem{remark}[theorem]{Remark}
\newtheorem{lemma}[theorem]{Lemma}
\newtheorem{notation}[theorem]{Notation}
\newtheorem{corollary}[theorem]{Corollary}
\newtheorem{setup}[theorem]{Setup}
\date{}
\begin{document}
\setlength{\parindent}{0pt}
\setlength{\parskip}{7pt}
\title{Negative cluster categories from simple minded collection quadruples}
\author{Francesca Fedele}
\address{Dipartimento di Informatica - Settore di Matematica Universita` degli Studi di Verona
Strada le Grazie 15 - Ca` Vignal, I-37134 Verona, Italy}
\email{francesca.fedele@univr.it}
\keywords{Hom-spaces, limits and colimits, $(d+1)$-simple minded system, SMC quadruple, $(-d)$-Calabi-Yau triple, negative cluster category, quotient categories, truncation triangles.}
\subjclass[2020]{16E45, 18A30, 18G80}
\begin{abstract}
Fomin and Zelevinsky's definition of cluster algebras laid the foundation for cluster theory.  The various categorifications and generalisations of the original definition led to Iyama and Yoshino's generalised cluster categories $\mathcal{T}/\mathcal{T}^{fd}$ coming from positive-Calabi-Yau triples $(\mathcal{T}, \mathcal{T}^{fd},\mathcal{M})$. Jin later defined simple minded collection quadruples $(\mathcal{T}, \mathcal{T}^{p},\mathbb{S},\mathcal{S})$, where the special case $\mathbb{S}=\Sigma^{-d}$ is the analogue of Iyama and Yang's triples: negative-Calabi-Yau triples.

In this paper, we further study the quotient categories $\mathcal{T}/\mathcal{T}^p$ coming from simple minded collection quadruples. Our main result uses limits and colimits to describe Hom-spaces over $\mathcal{T}/\mathcal{T}^p$ in relation to the easier to understand Hom-spaces over $\mathcal{T}$. Moreover, we apply our theorem to give a different proof of a result by Jin: if we have a negative-Calabi-Yau triple, then $\mathcal{T}/\mathcal{T}^p$ is a negative cluster category. 
\end{abstract}
\maketitle
\section{Introduction}
Cluster theory developed starting from the definition of cluster algebras by Fomin and Zelevinsky, see \cite[Definition 2.3]{FZ}.  Buan, Marsh, Reineke,
Reiten, and Todorov later categorified the original definition by introducing cluster categories in \cite[Section 1]{BRMRT}. Further categorifications have been widely studied in the past few years, see for example \cite{Gin}, \cite{AC}, \cite{GL}.
This led to two parallel generalisations of cluster categories coming from positive and negative Calabi-Yau triples respectively.

First, Iyama and Yang introduced generalised cluster categories, see \cite{IYa1}, which instead of coming from dg algebras, come from $d$-Calabi-Yau triples of the form $(\mathcal{T},\mathcal{T}^{fd},\mathcal{M})$ for some positive integer. The idea behind such a triple is that, starting from a triangulated category $\mathcal{T}$ with some extra assumptions and a triangulated subcategory $\mathcal{T}^{fd}$, we obtain a triangulated quotient category $\mathcal{T}/\mathcal{T}^{fd}$ which is a generalised cluster category: it is Hom-finite, $(d-1)$-Calabi-Yau and it has a $(d-1)$-cluster tilting object. This was first proven by Iyama and Yang in \cite[Section 5]{IYa1} and we presented an alternative proof using more classic means such as limits, colimits and a Gap Theorem in \cite{FF}.

Note that in this setup $(\mathcal{T}, \mathcal{T}^{fd})$ is relative $d$-Calabi-Yau in the sense that for every $X\in\mathcal{T}^{fd}$ and $Y\in\mathcal{T}$ there exists a bifunctorial isomorphism of the form
\begin{align}\label{diagram_CY_Serre}\tag{$\star$}
D\mathcal{T}(X,Y)\cong \mathcal{T}(Y,\Sigma^d X).
\end{align}

An interesting question is what happens if we subsitute $\Sigma^d$ in (\ref{diagram_CY_Serre}) with a more general ``restricted Serre functor'' $\mathbb{S}:\mathcal{T}\rightarrow\mathcal{T}$. In particular, in the same fashion negative-Calabi-Yau categories can be studied, we can look at the special case when we have $\Sigma^{-d}$ instead of $\Sigma^d$ in (\ref{diagram_CY_Serre}). This motivates the parallel generalisation of cluster categories to negative cluster categories, see \cite{CS2,CPP,IJ}.

Coelho Sim\~oes showed there is a parallel between the mutation theory for $d$-simple minded systems in  $(-d)$-Calabi-Yau triangulated categories and $d$-cluster-tilting objects in $d$-Calabi-Yau triangulated categories, see \cite[Theorems A-D and the subsequent paragraph]{CS}. 
Moreover, in \cite{JH}, Jin defined SMC quadruples $(\mathcal{T},\mathcal{T}^p,\mathbb{S},\mathcal{S})$, where the case $\mathbb{S}=\Sigma^{-d}$ is the analogue of Iyama and Yang's triples, that is $(-d)$-Calabi-Yau triples, see Definition \ref{defn_SMC_quadruple}. Moreover, Coelho Sim\~oes, Pauksztello and Ploog then introduced negative cluster categories in \cite{CPP}, which is the analogue of the ``classical'' cluster category considered by Buan, Marsh, Reineke,
Reiten, and Todorov in \cite{BRMRT} and by Thomas in \cite{TH}. 

The aim of this paper is to prove that if $(\mathcal{T},\mathcal{T}^p,\mathcal{S})$ is a $(-d)$-Calabi-Yau triple, then $\mathcal{T}/\mathcal{T}^p$ is a negative cluster category, that is it is Hom-finite, $(-d-1)$-Calabi-Yau and it has a $(d+1)$-simple minded system in the sense of Definition \ref{defn_SMS}. Note that Jin already proved this in \cite[Theorem 4.5]{JH}.
In the same way we reproved the corresponding result in the positive-Calabi-Yau setup in \cite{FF}, we present an alternative proof of this result which uses different and more classic means, giving in this way a deeper understanding of $\mathcal{T}/\mathcal{T}^p$. In particular, we use limits and colimits to explicitly describe Hom spaces in this quotient category.

Let $k$ be an algebraically closed field and $(\mathcal{T}, \mathcal{T}^p,\mathbb{S},\mathcal{S})$ be an SMC quadruple. By property (RS2) of SMC quadruples, see Definition \ref{defn_SMC_quadruple}, for every integer $i$, the pair
\begin{align*}
(\mathcal{T}_{>i},\mathcal{T}_{\leq i}):=({}^{\perp}(\Sigma^{>-i-1}\mathcal{S}), {}^{\perp}(\Sigma^{<-i}\mathcal{S})) 
\end{align*}
is a co-$t$-structure on $\mathcal{T}$. Hence, for every object $X$ in $\mathcal{T}$, there is a truncation triangle of the form
\begin{align*}
    X_{>i}\xrightarrow{f_i} X\xrightarrow{g_i} X_{\leq i}\rightarrow \Sigma X_{>i},
\end{align*}
where $X_{>i}\in\mathcal{T}_{>i}$ and $X_{\leq i}\in \mathcal{T}_{\leq i}$. Note that such a triangle is not unique and when we write $X_{> i}$, respectively $X_{\leq i}$, we mean an object fitting in one such triangle.

The main result of this paper gives a relation between Hom-spaces in $\mathcal{T}$ and Hom-spaces in $\mathcal{T}/\mathcal{T}^p$ using limits and colimits. Note that our theorem recalls some results by Artin and Zhang, see 	\cite[Propositions 2.2 and 3.13]{AZ}.

{\bf Theorem A (=Theorem \ref{thm_directstab}).}
{\em Let $X$ and $Y$ be objects in $\mathcal{T}$.
\begin{enumerate}[label=(\alph*)]
\item For $p\gg0$, each direct system
\begin{align*}
\xymatrix@C=3em{
    \mathcal{T}(Y,X)\ar[r]&\mathcal{T}(Y,X_{\leq 0})\ar[r]&\mathcal{T}(Y,X_{\leq -1})\ar[r]&\cdots\ar[r]&\mathcal{T}(Y,X_{\leq -p})\ar[r]&\cdots
    }
\end{align*}
stabilizes. Moreover, we have that
    $\underset{p}{\varinjlim}\,\mathcal{T}(Y,X_{\leq -p})\cong \mathcal{T}/\mathcal{T}^{p}(Y, X)$.
\item Fixing truncation triangles for X and Y, we have that
    \begin{align*}
       \underset{q}{\varprojlim} \Big(\underset{p}{\varinjlim}\,\mathcal{T}(Y_{\leq -q},\Sigma X_{> -p})\Big)\cong \mathcal{T}/\mathcal{T}^{p}(Y,X).
    \end{align*}
\end{enumerate}}

Assume now that $\mathbb{S}=\Sigma^{-d}$ for some positive integer $d$. Let $Y$ be in $\mathcal{S}$. Considering $\Sigma^j Y$ for any integer $j$, we prove that the system from part (a) of the theorem is stable for $p\geq j-d+1$, and so 
\begin{align*}
\mathcal{T}/\mathcal{T}^p(\Sigma^j Y, X)\cong \mathcal{T}(\Sigma^j Y, X_{\leq -j+d-1}),
\end{align*}
see Corollary \ref{coro_YisS}. If we further have $X\in\mathcal{S}$, we can say when the Hom-space over $\mathcal{T}/\mathcal{T}^p$ vanishes, see Corollary \ref{coro_prop(2)}.

We apply our theorem to prove the following result saying that $\mathcal{T}/\mathcal{T}^p$ is a negative cluster category coming from a negative-Calabi-Yau triple. Note that this is the analogue of Iyama and Yang's generalised cluster category coming from a positive-Calabi-Yau triple, see \cite[Section 5]{IYa1}.

{\bf Theorem B.}
{\em 
\begin{enumerate}[label=(\alph*)]
\item The category $\mathcal{T}/\mathcal{T}^p$ is Hom-finite and $(-d-1)$-Calabi-Yau.
\item The subcategory $\mathcal{S}\subseteq \mathcal{T}/\mathcal{T}^p$ is a $(d+1)$-simple minded system ($(d+1)$-SMS) in the sense of Definition \ref{defn_SMS}.
\end{enumerate}
}

Parts (a) and (b) correspond to Theorems \ref{lemma_negCY} and \ref{coro_SMS} respectively. Note that these results have  already been proven by Jin in \cite[Theorem 4.5]{JH} but we present a more efficient proof. We use Theorem A to prove both part (a) and properties (1) and (2) of Definition \ref{defn_SMS} and a lemma by Iyama and Yang from \cite{IYa} to prove property (3). Note that Jin relies on \cite{AC} to prove part (a) and on \cite{IYa} for his proof of part (b) but he applies the main theorem of Iyama and Yang's paper.

The paper is organised as follows. Section \ref{section_background} presents some background material and our setup. Section \ref{section_main} studies Hom spaces in $\mathcal{T}/\mathcal{T}^p$ and proves Theorem A. Finally, Section \ref{section_application} applies Theorem A to prove Theorem B.

\section{Background}\label{section_background}
The definition of simple-minded collections was first introduced by Koenig and Yang in \cite[Definition 3.2]{KY}, but the same concept was also defined earlier by Al-Nofayee in \cite{AN} as \textit{cohomologically Schurian set of generators}. We recall it here in the form presented in \cite[Definition 2.4]{JH} by Jin. Note that the base field $k$ is assumed to be algebraically closed throughout the paper.

\begin{defn}
Let $\mathcal{T}$ be a $k$-linear, Hom-finite, Krull-Schmidt triangulated category and $\mathcal{S}$ be a subcategory of $\mathcal{T}$. We say that $\mathcal{S}$ is a \textit{simple minded collection (SMC)} if the following hold for any objects $X$ and $Y$ in $\mathcal{S}$:
\begin{enumerate}
\item $\mathcal{T}(X, \Sigma^{<0}Y)=0$;
\item dim$_k \mathcal{T}(X,Y)=\delta_{XY}$;
\item thick$(\mathcal{S})=\mathcal{T}$.
\end{enumerate}
\end{defn}

\begin{notation}
Let $\mathcal{T}$ be a triangulated category and $\mathcal{X}$, $\mathcal{Y}$ two full subcategories of $\mathcal{T}$. We write
\begin{align*}
\mathcal{X}*\mathcal{Y}:=\{ T\in\mathcal{T}\mid \text{ there exists a triangle } X\rightarrow T\rightarrow Y\rightarrow \Sigma X \text{ in }\mathcal{T}, \text{ with } X\in\mathcal{X},\, Y\in\mathcal{Y} \}.
\end{align*}
When $\mathcal{T}(\mathcal{X},\mathcal{Y})=0$ holds, we write $\mathcal{X}*\mathcal{Y}=\mathcal{X}\perp\mathcal{Y}$. Moreover, we write
\begin{align*}
\mathcal{X}^\perp=\{T\in\mathcal{T}\mid \mathcal{T}(\mathcal{X},T)=0\} \,\,\text{ and }\,\,
{}^\perp\mathcal{X}=\{T\in\mathcal{T}\mid \mathcal{T}(T,\mathcal{X})=0\}.
\end{align*}
\end{notation}

\begin{defn}[{\cite[Definition 4.1]{JH}}]\label{defn_SMC_quadruple}
We say that a quadruple $(\mathcal{T}, \mathcal{T}^p,\mathbb{S},\mathcal{S})$ is a \textit{simple minded collection (SMC) quadruple} if the following are satisfied:
\begin{enumerate}
    \item[(RS0)] $\mathcal{T}$ is a $k$-linear, Hom-finite, Krull-Schmidt triangulated category and $\mathcal{T}^p$ is a thick subcategory of $\mathcal{T}$;
    \item[(RS1)] $\mathbb{S}:\mathcal{T}\rightarrow \mathcal{T}$ is a triangle equivalence restricting to an equivalence $\mathbb{S}:\mathcal{T}^p\rightarrow\mathcal{T}^p$ and satisfying a bifunctorial isomorphism for any $X\in\mathcal{T}^p$ and $Y\in\mathcal{T}$:
    \begin{align*}
        D\mathcal{T}(X,Y)\cong \mathcal{T}(Y,\mathbb{S}X);
    \end{align*}
    \item[(RS2)] $\mathcal{S}$ is a SMC in $\mathcal{T}$ and $\mathcal{T}= {}^{\perp} (\Sigma^{\geq 0}\mathcal{S})\perp{}^{\perp} (\Sigma^{< 0}\mathcal{S})= (\Sigma^{\geq 0}\mathcal{S})^{\perp}\perp (\Sigma^{< 0}\mathcal{S})^{\perp}$ are co-$t$-structures of $\mathcal{T}$ satisfying ${}^{\perp} (\Sigma^{\geq 0}\mathcal{S})\subset \mathcal{T}^p$ and $(\Sigma^{< 0}\mathcal{S})^{\perp}\subset \mathcal{T}^p$.
\end{enumerate}
Moreover, if $\mathbb{S}=\Sigma^{-d}$ for some integer $d\geq 0$, we say that $(\mathcal{T}, \mathcal{T}^p, \mathcal{S})$ is a \textit{$(-d)$-Calabi-Yau ($(-d)$-CY) triple}.
\end{defn}

The definition of $n$-simple minded system was first introduced in \cite[Section 1]{CS} and, applying \cite[Lemma 2.8]{CP} to the equivalent definition \cite[Definition 2.1]{CP}, one can see it is equivalent to the following.

\begin{defn}\label{defn_SMS}
Let $\mathcal{T}$ be a $k$-linear, Hom-finite, Krull-Schmidt triangulated category and $n\geq 1$ be an integer. A collection of objects $\mathcal{C}$ in $\mathcal{T}$ is an \textit{$n$-simple minded system ($n$-SMS)} if:
\begin{enumerate}
\item dim$_k \mathcal{T}(X,Y)=\delta_{XY}$ for every $X,\,Y$ in $\mathcal{C}$;
\item if $n\geq 2$, $\Hom(\Sigma^m X,Y)=0$ for $1\leq m\leq n-1$ and $X,\,Y$ in $\mathcal{C}$;
\item $\mathcal{T}=\langle \mathcal{C}\rangle * \Sigma^{-1}\langle\mathcal{C}\rangle*\dots* \Sigma^{1-n}\langle\mathcal{C}\rangle$.
\end{enumerate}
\end{defn}
\begin{remark}
Note that we are following the definition as stated in \cite{CP}, while Jin applies a shift to it when defining an $n$-SMS in \cite[Definition 2.7]{JH}.
\end{remark}

\begin{setup}
Let $(\mathcal{T}, \mathcal{T}^p,\mathbb{S},\mathcal{S})$ be an SMC quadruple.
\end{setup}

\begin{notation}
Let $i$ be an integer, we set $\mathcal{T}_{>i}=\mathcal{T}_{\geq i+1}:={}^{\perp}(\Sigma^{>-i-1}\mathcal{S})$, $\mathcal{T}_{< i+1}=\mathcal{T}_{\leq i}:={}^{\perp}(\Sigma^{<-i}\mathcal{S})$, $\mathcal{T}^{> i}= \mathcal{T}^{\geq i+1}:=\langle \Sigma^{<-i}\mathcal{S}\rangle$ and $\mathcal{T}^{< i}= \mathcal{T}^{\leq i-1}:=\langle \Sigma^{\geq 1-i}\mathcal{S}\rangle$. 
\end{notation}
\begin{remark}\label{remark_cot_structure}
Note that $\mathcal{T}=\mathcal{T}_{>i}\perp \mathcal{T}_{\leq i}$ is a co-$t$-structure with $\mathcal{T}_{>i}\subset \mathcal{T}^p$. Hence $\mathcal{T}_{>i+1}\subseteq \Sigma \mathcal{T}_{>i+1}=\mathcal{T}_{>i}$ and $\mathcal{T}_{\leq i}\subseteq \Sigma^{-1}\mathcal{T}_{\leq i}=\mathcal{T}_{\leq i+1}$.
As $(\mathcal{T}_{>i},\mathcal{T}_{\leq i})$ is a torsion pair, for any $X\in\mathcal{T}$ there is a (non-unique) truncation triangle of the form
\begin{align*}
    X_{>i}\xrightarrow{f_i} X\xrightarrow{g_i} X_{\leq i}\rightarrow \Sigma X_{>i},
\end{align*}
where $X_{>i}\in\mathcal{T}_{>i}$, $X_{\leq i}\in \mathcal{T}_{\leq i}$ and we may assume that $g_i$ is left minimal.
When we write $X_{> i}$, respectively $X_{\leq i}$, we mean an object fitting in one of such truncation triangles.

Note also that $\mathcal{T}= \mathcal{T}^{\leq i}\perp \mathcal{T}^{>i}$ is a bounded $t$-structure and $\mathcal{T}^{\leq i}=\mathcal{T}_{\leq i}$, see \cite[Top of page 12]{JH}.
\end{remark}

\begin{lemma}\label{lemma_directsystem}
For any $X\in\mathcal{T}$, fixing a set of truncation triangles as described in Remark \ref{remark_cot_structure}, there is a direct system of the form
\begin{align*}
    X\xrightarrow{g_0} X_{\leq 0}\xrightarrow{\xi_0} X_{\leq -1}\xrightarrow{\xi_{-1}} X_{\leq -2}\xrightarrow{\xi_{-2}}\cdots.
\end{align*}
\end{lemma}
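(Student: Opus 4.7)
The plan is to construct, for each integer $i\leq 0$, a morphism $\xi_i\colon X_{\leq i}\to X_{\leq i-1}$; concatenated with the given $g_0\colon X\to X_{\leq 0}$ they form the claimed direct system. Since the index set $\mathbb{Z}_{\leq 0}$ is totally ordered, no cocycle condition needs to be checked: producing the $\xi_i$'s suffices. Each $\xi_i$ will be obtained as a factorisation forced by the torsion-pair structure of the co-$t$-structure $(\mathcal{T}_{>i-1},\mathcal{T}_{\leq i-1})$.

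The key input is a Hom-vanishing. By Remark \ref{remark_cot_structure}, $\mathcal{T}_{>i}\subseteq \mathcal{T}_{>i-1}$, so the fixed truncation object $X_{>i}\in\mathcal{T}_{>i}$ also lies in $\mathcal{T}_{>i-1}$. Since $(\mathcal{T}_{>i-1},\mathcal{T}_{\leq i-1})$ is a torsion pair, this yields
\[
\mathcal{T}(X_{>i},X_{\leq i-1})=0.
\]

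Next I would apply the cohomological functor $\mathcal{T}(-,X_{\leq i-1})$ to the truncation triangle
\[
X_{>i}\xrightarrow{f_i} X\xrightarrow{g_i} X_{\leq i}\to \Sigma X_{>i}
\]
to obtain the exact sequence
\[
\mathcal{T}(X_{\leq i},X_{\leq i-1})\xrightarrow{g_i^{*}} \mathcal{T}(X,X_{\leq i-1})\to \mathcal{T}(X_{>i},X_{\leq i-1})=0.
\]
Surjectivity of $g_i^{*}$ lets me lift $g_{i-1}\in\mathcal{T}(X,X_{\leq i-1})$ to some $\xi_i\colon X_{\leq i}\to X_{\leq i-1}$ satisfying $\xi_i\circ g_i=g_{i-1}$. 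Assembling these $\xi_i$ with $g_0$ gives the required sequence.

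There is no real obstacle: the argument is a standard use of the co-$t$-structure axioms, and the statement asks only for the existence of the direct system, not for uniqueness or naturality of the $\xi_i$. The identity $\xi_i\circ g_i=g_{i-1}$ picked up along the way will be useful for comparing Hom-spaces at consecutive truncation levels in the proof of Theorem~A.
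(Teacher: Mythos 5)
Your argument is correct and rests on the same key fact as the paper's, namely the Hom-vanishing $\mathcal{T}(X_{>i},X_{\leq i-1})=0$ coming from $\mathcal{T}_{>i}\subseteq \mathcal{T}_{>i-1}$ and the torsion pair $(\mathcal{T}_{>i-1},\mathcal{T}_{\leq i-1})$. The only difference in execution is that you lift along the right-hand side: you apply the contravariant functor $\mathcal{T}(-,X_{\leq i-1})$ to the $i$-th truncation triangle and use surjectivity of $g_i^*$ to lift $g_{i-1}$ directly to $\xi_i$, whereas the paper applies the covariant functor to the $(i-1)$-st triangle to first produce $\alpha_i\colon X_{>i}\to X_{>i-1}$ with $f_{i-1}\circ\alpha_i=f_i$ and then invokes the morphism-of-triangles axiom (TR3) to manufacture $\xi_i$. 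Your version is marginally more economical since it avoids TR3, and you correctly record the relation $\xi_i\circ g_i=g_{i-1}$, which the paper obtains as the commuting square in its morphism of triangles and uses later (e.g.\ in Lemma~\ref{lemma_iso_quotient_cat}).
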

\begin{proof}
Given $X$ in $\mathcal{T}$ and an integer $i$, then fixing truncation triangles as in Remark \ref{remark_cot_structure}, there is a morphism of triangles of the form:
\begin{align*}
    \xymatrix@C=3em{
    X_{> i}\ar[r]^{f_{i}}\ar[d]^{\alpha_{i}}& X\ar[r]^{g_i}\ar@{=}[d]& X_{\leq i}\ar[r]\ar@{-->}[d]^{\xi_i}&\Sigma X_{> i}\ar[d]\\
    X_{> i-1}\ar[r]_{f_{i-1}}& X\ar[r]_{g_{i-1}}& X_{\leq i-1}\ar[r]&\Sigma X_{> i-1},
    }
\end{align*}
where $\alpha_{i}$ such that $f_{i-1}\circ \alpha_{i}=f_{i}$ exists since $X_{> i}\in\mathcal{T}_{>i}\subset \mathcal{T}_{>i-1}$, $X_{\leq i-1}\in\mathcal{T}_{\geq i-1}$ and $(\mathcal{T}_{>i-1},\mathcal{T}_{\leq i-1})$ is a torsion pair and $\xi_i$ then exists by the axioms of triangulated categories. Then
\begin{align*}
    X\xrightarrow{g_0} X_{\leq 0}\xrightarrow{\xi_0} X_{\leq -1}\xrightarrow{\xi_{-1}} X_{\leq -2}\xrightarrow{\xi_{-2}}\cdots
\end{align*}
is a direct system.
\end{proof}

\section{Morphisms in $\mathcal{T}/\mathcal{T}^p$}\label{section_main}
The goal of this section is to prove Theorem \ref{thm_directstab}. The whole section closely follows \cite[Section 3]{FF} which proves the corresponding result for cluster categories coming from positive-Calabi-Yau triples. Most of the proofs in this section are the same as the proofs of the corresponding results in \cite{FF} up to using the direct system obtained from co-$t$-structure truncations and the Hom-vanishing coming from the simple-minded analogues in \cite{JH} of the corresponding silting results in \cite{IYa1}. Note that in \cite{FF} the truncation triangles come from $t$-structures and are hence unique. On the other hand, here they come from co-$t$-structures and are not unique, and so the direct system and the objects $C^X_{-p}$ introduced later in the section depend on the choice of the truncation triangles.

\begin{lemma}\label{lemma_iso_quotient_cat}
Let $X\in\mathcal{T}$. For any integer $m$, in the quotient category $\mathcal{T}/\mathcal{T}^{p}$ for each truncation triangle we have that $g_m$ and $\xi_m$ become  isomorphisms. Moreover, $X\cong_{\mathcal{T}/\mathcal{T}^{p}} X_{\leq m}$.
\end{lemma}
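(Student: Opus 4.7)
The plan is to exploit the universal property of the Verdier quotient $\mathcal{T}/\mathcal{T}^p$: a morphism of $\mathcal{T}$ becomes an isomorphism in $\mathcal{T}/\mathcal{T}^p$ precisely when its cone lies in the thick subcategory $\mathcal{T}^p$. The crucial input is already available from Remark \ref{remark_cot_structure}, namely that $\mathcal{T}_{>m}\subset\mathcal{T}^p$ for every integer $m$, and hence $X_{>m}\in\mathcal{T}^p$. Since $\mathcal{T}^p$ is thick, it is closed under $\Sigma$ and $\Sigma^{-1}$ as well as under taking cones.

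First I would handle $g_m$. Rotating the truncation triangle yields the triangle
\begin{align*}
X\xrightarrow{g_m} X_{\leq m}\to \Sigma X_{>m}\to \Sigma X,
\end{align*}
so the cone of $g_m$ is $\Sigma X_{>m}\in\mathcal{T}^p$. Therefore $g_m$ becomes an isomorphism in $\mathcal{T}/\mathcal{T}^p$, which at once gives the "moreover" assertion $X\cong_{\mathcal{T}/\mathcal{T}^{p}} X_{\leq m}$.

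Next I would deduce the statement for $\xi_m$ directly from the commutative diagram constructed in the proof of Lemma \ref{lemma_directsystem}: commutativity of the middle square gives the identity
\begin{align*}
\xi_m\circ g_m=g_{m-1}.
\end{align*}
Both $g_m$ and $g_{m-1}$ are now invertible in $\mathcal{T}/\mathcal{T}^p$ by the previous step, so $\xi_m=g_{m-1}\circ g_m^{-1}$ is an isomorphism in the quotient too. (Alternatively, one can apply the octahedral axiom to the factorisation $f_m=f_{m-1}\circ\alpha_m$ to produce a triangle $\operatorname{Cone}(\alpha_m)\to X_{\leq m}\xrightarrow{\xi_m} X_{\leq m-1}\to \Sigma\operatorname{Cone}(\alpha_m)$, and observe that $\operatorname{Cone}(\alpha_m)\in\mathcal{T}^p$ because $X_{>m},X_{>m-1}\in\mathcal{T}^p$ and $\mathcal{T}^p$ is triangulated.)

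There is essentially no obstacle here: the whole argument reduces to the observation $X_{>m}\in\mathcal{T}^p$ together with a standard bookkeeping of cones in a Verdier localisation. This lemma is a warm-up whose real purpose is to set up the more delicate analysis in Theorem A, where one must control not only that $g_m$ becomes an iso, but also the behaviour of the induced direct system of Hom-spaces.
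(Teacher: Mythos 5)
Your proof is correct and follows essentially the same line as the paper: both arguments reduce to the observation that $X_{>m}\in\mathcal{T}_{>m}\subset\mathcal{T}^p$, which forces $g_m$ to become invertible in the Verdier quotient, and both then deduce invertibility of $\xi_m$ in $\mathcal{T}/\mathcal{T}^p$ from the relation $\xi_m\circ g_m=g_{m-1}$. The only cosmetic difference is that you phrase the first step via the cone of $g_m$ lying in $\mathcal{T}^p$, while the paper phrases it as $X_{>m}\cong 0$ in the quotient; these are the same fact.
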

\begin{proof}
By (RS2) and since  $\mathcal{T}^{p}$ is closed under integer powers of $\Sigma$, we have that $\mathcal{T}_{> m}\subset \mathcal{T}^{p}$ for any integer $m$. Then any truncation triangle
\begin{align*}
    X_{> m}\xrightarrow{f_m} X\xrightarrow{g_{m}} X_{\leq m}\rightarrow\Sigma X_{> m}
\end{align*}
viewed as a triangle in $\mathcal{T}/\mathcal{T}^{p}$ is such that $X_{> m}\cong_{\mathcal{T}/\mathcal{T}^{p}} 0$ and $g_m$ is an isomorphism. Hence $X\cong_{\mathcal{T}/\mathcal{T}^{p}} X_{\leq m}$. Since $\xi_m\circ g_{m} =g_{m-1}$, while $g_{m-1}$ and $g_{m}$ become isomorphisms in $\mathcal{T}/\mathcal{T}^{p}$, so does $\xi_m$.
\end{proof}

\begin{remark}\label{remark_psi}
Given $X$ and $Y$ in $\mathcal{T}$, applying the functor $\mathcal{T}(Y,-)$ to a direct system as in Lemma \ref{lemma_directsystem}, we obtain a direct system of the form
\begin{align*}
\xymatrix@C=2.5em{
    \mathcal{T}(Y,X)\ar[r]&\mathcal{T}(Y,X_{\leq 0})\ar[r] &\mathcal{T}(Y,X_{\leq -1})\ar[r]& \mathcal{T}(Y,X_{\leq -2})\ar[r] &\cdots.
    }
\end{align*}
Moreover, passing to the quotient category $\mathcal{T}/\mathcal{T}^p$ using the quotient functor $Q: \mathcal{T} \rightarrow \mathcal{T} /\mathcal{T}^p$, we obtain another direct system and a commutative diagram of the form
\begin{align*}
\xymatrix@C=2.5em{
    \mathcal{T}(Y,X)\ar[r]\ar[d]^{Q(-)}&\mathcal{T}(Y,X_{\leq 0})\ar[r]\ar[d]^{Q(-)}&\mathcal{T}(Y,X_{\leq -1})\ar[r]\ar[d]^{Q(-)}&\mathcal{T}(Y,X_{\leq -2})\ar[r]\ar[d]^{Q(-)}&\cdots\\
    \mathcal{T}/\mathcal{T}^{p}(Y,X)\ar[r]^{\sim}&\mathcal{T}/\mathcal{T}^{p}(Y,X_{\leq 0})\ar[r]^{\sim}&\mathcal{T}/\mathcal{T}^{p}(Y,X_{\leq -1})\ar[r]^{\sim}&\mathcal{T}/\mathcal{T}^{p}(Y,X_{\leq -2})\ar[r]^-{\sim}&\cdots,
    }
\end{align*}
where all the arrows in the bottom row are isomorphisms by Lemma \ref{lemma_iso_quotient_cat}.
Then, by the universal property of direct systems, there exists a unique morphism of the form
\begin{align*}
    \Psi: \underset{p}{\varinjlim}\,\mathcal{T}(Y, X_{\leq -p})\rightarrow \underset{p}{\varinjlim}\,\mathcal{T}/\mathcal{T}^{p}(Y,X_{\leq -p})
\end{align*}
such that the diagram
\begin{align}\label{diagram_psi_lim}
    \xymatrix{
    \mathcal{T}(Y,X_{\leq -q})\ar[d]_{\nu_{-q}}\ar[r]^{Q(-)}&\mathcal{T}/\mathcal{T}^{p}(Y,X_{\leq -q})\ar[d]^-{\rotatebox{90}{$\sim$}}_{\overline{\nu}_{-q}}\\
    \underset{p}{\varinjlim}\,\mathcal{T}(Y,X_{\leq -p})\ar@{-->}[r]^-{\Psi}&\underset{p}{\varinjlim}\,\mathcal{T}/\mathcal{T}^{p}(Y,X_{\leq -p})
    }
\end{align}
commutes for every $q\geq 0$.
\end{remark}

\begin{lemma}\label{lemma_psi_iso}
For a construction as in Remark \ref{remark_psi}, the  morphism
\begin{align*}
    \Psi: \underset{p}{\varinjlim}\,\mathcal{T}(Y, X_{\leq -p})\rightarrow \underset{p}{\varinjlim}\,\mathcal{T}/\mathcal{T}^{p}(Y,X_{\leq -p})
\end{align*}
is an isomorphism.
\end{lemma}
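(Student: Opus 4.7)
The plan is to prove $\Psi$ is both injective and surjective, with a key vanishing lemma as the main input and a standard description of morphisms in the Verdier quotient via roofs for the rest.

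\emph{Vanishing lemma.} First I would show: for any $T\in\mathcal{T}^{p}$ and any integer $j$, there exists $p_{0}$ with $\mathcal{T}(\Sigma^{j}T,X_{\leq -p})=0$ for all $p\geq p_{0}$. Since $\mathcal{T}^{p}$ is thick, $\Sigma^{j}T\in\mathcal{T}^{p}$, so (RS1) gives
\begin{align*}
D\mathcal{T}(\Sigma^{j}T,X_{\leq -p})\cong \mathcal{T}(X_{\leq -p},\mathbb{S}\Sigma^{j}T).
\end{align*}
By Remark \ref{remark_cot_structure} the $t$-structure on $\mathcal{T}$ is bounded, so $\mathbb{S}\Sigma^{j}T\in \mathcal{T}^{\geq -N}$ for some $N$; combined with $X_{\leq -p}\in\mathcal{T}^{\leq -p}$, the $t$-structure orthogonality $\mathcal{T}(\mathcal{T}^{\leq m},\mathcal{T}^{\geq m+1})=0$ (applied with $m=-p$) forces the right-hand side to vanish as soon as $p\geq N+1$.

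\emph{Surjectivity.} Since all transition arrows in the bottom row of the diagram in Remark \ref{remark_psi} are isomorphisms, any element of $\varinjlim_{p}\mathcal{T}/\mathcal{T}^{p}(Y,X_{\leq -p})$ is of the form $\overline{\nu}_{-q}(\bar\phi)$ for some $\bar\phi\in \mathcal{T}/\mathcal{T}^{p}(Y,X_{\leq -q})$. I would represent $\bar\phi$ by a Verdier roof $Y\xleftarrow{s}Z\xrightarrow{g}X_{\leq -q}$ with $C:=\text{cone}(s)\in\mathcal{T}^{p}$, and set $\xi_{p,q}:=\xi_{-p+1}\circ\cdots\circ\xi_{-q}\colon X_{\leq -q}\to X_{\leq -p}$. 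Applying $\mathcal{T}(-,X_{\leq -p})$ to the triangle $Z\xrightarrow{s}Y\to C\to\Sigma Z$ yields a long exact sequence whose relevant segment is $\mathcal{T}(Y,X_{\leq -p})\to \mathcal{T}(Z,X_{\leq -p})\to \mathcal{T}(\Sigma^{-1}C,X_{\leq -p})$; by the vanishing lemma applied to $\Sigma^{-1}C\in\mathcal{T}^{p}$, the rightmost term is zero for $p$ large, so $\xi_{p,q}\circ g$ lifts to some $h:Y\to X_{\leq -p}$ with $h\circ s=\xi_{p,q}\circ g$. Since $Q(s)$ is invertible (its cone lies in $\mathcal{T}^{p}$), one computes $Q(h)=Q(\xi_{p,q})\circ Q(g)\circ Q(s)^{-1}=Q(\xi_{p,q})\circ \bar\phi$, which represents the same class in the direct limit as $\bar\phi$ does; hence $\Psi(\nu_{-p}(h))=\overline{\nu}_{-q}(\bar\phi)$.

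\emph{Injectivity and main obstacle.} Suppose $\nu_{-q}(h)\in\ker\Psi$. Commutativity of the diagram in Remark \ref{remark_psi} together with $\overline{\nu}_{-q}$ being an isomorphism forces $Q(h)=0$ in $\mathcal{T}/\mathcal{T}^{p}(Y,X_{\leq -q})$, so $h$ factors as $h=b\circ a$ with $a:Y\to T$, $b:T\to X_{\leq -q}$ and $T\in\mathcal{T}^{p}$. Then $\xi_{p,q}\circ h=(\xi_{p,q}\circ b)\circ a$, and the vanishing lemma applied to $T$ gives $\xi_{p,q}\circ b=0$ for $p$ large, whence $\xi_{p,q}\circ h=0$ in $\mathcal{T}(Y,X_{\leq -p})$ and so $\nu_{-q}(h)=0$ in the direct limit. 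The one genuinely delicate ingredient will be the vanishing lemma: extracting a uniform bound by combining the restricted Serre duality (RS1) with the boundedness of the $t$-structure. Once that is in place, both directions reduce to routine bookkeeping with Verdier roofs and the universal property of direct limits.
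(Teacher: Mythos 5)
Your proof is correct and has the same overall shape as the paper's (injectivity plus surjectivity via roofs and a vanishing input), but the technical engine is different, and it is worth noting where. The paper's proof of this lemma relies on Jin's result \cite[Lemma 4.9]{JH}, which says that for $Z\in\mathcal{T}^p$ one has $\mathcal{T}(Z,\Sigma^i\mathcal{S})\neq 0$ for only finitely many $i$; this is used to place $Z$ in some $\mathcal{T}_{\geq -j}$ and then invoke co-$t$-structure orthogonality against $X_{\leq -j-1}$. You instead prove a self-contained vanishing lemma directly from (RS1) and boundedness of the $t$-structure: Serre duality converts $\mathcal{T}(\Sigma^jT,X_{\leq -p})$ to $\mathcal{T}(X_{\leq -p},\mathbb{S}\Sigma^jT)$, and then $t$-structure orthogonality does the work. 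This is essentially the same argument that the paper later records as Lemma \ref{lemma_X_thick}, applied after a duality flip; the logical content is equivalent, but your version avoids the external citation and makes the role of (RS1) explicit. Two other cosmetic differences: you characterize $Q(h)=0$ by ``factors through an object of $\mathcal{T}^p$'' rather than the paper's ``killed by a morphism in the multiplicative system''; these are equivalent for a thick subcategory. And you use left roofs $Y\leftarrow Z\rightarrow X_{\leq -q}$ where the paper uses right roofs $Y\rightarrow Z\leftarrow X$; again equivalent. Each step checks out, including the bound $p\geq N+1$ in the vanishing lemma (from $\mathbb{S}\Sigma^jT\in\mathcal{T}^{\geq -N}$ and $X_{\leq -p}\in\mathcal{T}^{\leq -p}$), the lift of $\xi_{p,q}\circ g$ across the triangle on $s$, and the identification $Q(h)=Q(\xi_{p,q})\circ\bar\phi$. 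One small stylistic caution: using $T$ both for the ambient category $\mathcal{T}$ and for an object of $\mathcal{T}^p$ is potentially confusing and should be renamed when written up.
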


\begin{proof}
We first prove that $\Psi$ is injective. Suppose that $\beta\in\varinjlim\,\mathcal{T}(Y, X_{\leq -p})$ is such that $\Psi(\beta)=0$. By \cite[Lemma 5.30(i)]{Rot}, we have that $\beta$ comes from an element in one of the Hom-spaces of the direct system, say from the element $\alpha\in \mathcal{T}(Y, X_{\leq -q})$. Then, considering the commutative diagram (\ref{diagram_psi_lim}), we have
\begin{align*}
\overline{\nu}_{-q}\circ Q(\alpha)=\Psi\circ \nu_{-q}(\alpha)=\Psi(\beta)=0.
\end{align*}
Since $\overline{\nu}_{-q}$ is an isomorphism, we have that $Q(\alpha)=0$. Hence there exists a morphism $f: X_{\leq -q}\rightarrow K$ in the multiplicative system being inverted when we pass to $\mathcal{T}/\mathcal{T}^p$ such that $f\circ \alpha=0$. Consider the triangle extending $f$, say
\begin{align*}
\xymatrix{
Z\ar[r]^\gamma& X_{\leq -q}\ar[r]^f& K\ar[r]&\Sigma Z,
}
\end{align*}
where $Z\in\mathcal{T}^p$. By \cite[Lemma 4.9]{JH}, since $Z\in\mathcal{T}^p$, we have that $\mathcal{T}(Z,\Sigma^i \mathcal{S})\neq 0$ for finitely many $i\in\mathbb{Z}$. Hence there is an integer $j$ such that $\mathcal{T}(Z,\Sigma^{>j}\mathcal{S})=0$, that is $Z\in \mathcal{T}_{\geq -j}={}^\perp(\mathcal{T}_{\leq -j-1})$. Then, as $X_{\leq -j-1}\in\mathcal{T}_{\leq -j-1}$, we have that $\mathcal{T}(Z,X_{\leq -j-1})=0$. Since $ f\circ\alpha= 0$, there exists a morphism $c: Y\rightarrow Z$ such that $\gamma\circ c=\alpha$, that is such that the following commutes:
\begin{align*}
    \xymatrix@C=5em{
    & Y\ar[d]^\alpha\ar[rd]^0\ar@{-->}[ld]_c\\
    Z\ar[r]^\gamma &X_{\leq -q}\ar[r]^f&K\ar[r]&\Sigma Z.
    }
\end{align*}
If $-j-1>-q$, then $Z\in\mathcal{T}_{\geq -j}\subseteq \mathcal{T}_{\geq -q+1}={}^\perp(\mathcal{T}_{\leq -q})$. So $\gamma=0$ and $\alpha=\gamma\circ c=0$ and $\beta=\nu_{-q}(\alpha)=0$. In the other case, that is $-j-1\leq -q$, the direct system gives us a morphism $\xi: X_{\leq -q}\rightarrow X_{\leq -j-1}$.
Then, as $\mathcal{T}(Z,X_{\leq -j-1})=0$, we have that $\xi\circ \gamma=0$ and so $\xi\circ\alpha=\xi\circ \gamma\circ c=0$. Consider the commutative diagram
\begin{align*}
    \xymatrix@C=3em{
    \mathcal{T}(Y,X_{\leq -q})\ar[r]^{\xi_*}\ar[rd]_{\nu_{-q}}& \mathcal{T}(Y,X_{\leq -j-1})\ar[d]^{\nu_{-j-1}}\\
    & \underset{p}{\varinjlim}\,\mathcal{T}(Y,X_{\leq -p}).
    }
\end{align*}
We have that $0=\nu_{-j-1}(0)=\nu_{-j-1}(\xi\circ \alpha)=\nu_{-j-1}\circ\xi_*(\alpha)=\nu_{-q}(\alpha)=\beta$. Hence $\Psi$ is injective.

It remains to show that $\Psi$ is also surjective. Let $\gamma$ be an element in ${\varinjlim}\,\mathcal{T}/\mathcal{T}^{p}(Y,X_{\leq -p})$. By \cite[Lemma 5.30(i)]{Rot}, $\gamma$ comes from an element in one of the Hom-spaces of the direct system and since these are all isomorphic, we may assume $\gamma$ comes from an element of the form
\begin{align*}
    \alpha=\left[ \vcenter{\xymatrix@R=1em @C=1em{
& Z&
\\ Y\ar[ru]^f& &X\ar[lu]_{s}}}\right]\in\mathcal{T}/\mathcal{T}^{p}(Y,X),
\end{align*}
where the triangle extending $s$, say
\begin{align*}
    \xymatrix@C=2.5em{
    W\ar[r]&X\ar[r]_{\sim}^s&Z\ar[r]&\Sigma W,
    }
\end{align*}
has $W$ in $\mathcal{T}^{p}$ since $s$ is in the multiplicative system being inverted. Then, \cite[Lemma 4.9]{JH} implies that there exists an integer $j$ such that $\mathcal{T}(W, \Sigma^{>j}\mathcal{S})=0$, that is $W\in\mathcal{T}_{\geq -j}={}^\perp (\mathcal{T}_{\leq -j-1})$.
As $X_{\leq -j-1}\in \mathcal{T}_{\leq -j-1}$, we have that $\mathcal{T}(W,X_{\leq -j-1})=0$.
Without loss of generality, we may assume $j$ is positive. Then, the direct system from Lemma \ref{lemma_directsystem} gives a morphism $\xi: X\rightarrow X_{\leq -j-1}$ and we have a commutative diagram:
\begin{align*}
\xymatrix@C=5em{
W\ar[r]\ar[rd]_0&X\ar[r]^s\ar[d]^\xi&Z\ar[r]\ar@{-->}[ld]^z&\Sigma W.\\
&X_{\leq -j-1}&&
}
\end{align*}
Then, we have that
\begin{align*}
\left[ \vcenter{\xymatrix@R=1em @C=1em{
& {X_{\leq -j-1}}&
\\ X\ar[ru]^-{\xi}& &{X_{\leq -j-1}}\ar@{=}[lu]}}\right]\circ \left[ \vcenter{\xymatrix@R=1em @C=1em{
& Z&
\\ Y\ar[ru]^f& &X\ar[lu]_{s}}}\right]
=\left[ \vcenter{\xymatrix@R=1em @C=1em{
& {X_{\leq -j-1}}&
\\ Y\ar[ru]^-{z\circ f}& &{X_{\leq -j-1}}\ar@{=}[lu]}}\right]= Q(z\circ f).
\end{align*}
Consider the commutative diagram
\begin{align*}
    \xymatrix{
    & \mathcal{T}/\mathcal{T}^{p}(Y,X)\ar[d]^-{\rotatebox{90}{$\sim$}}_{\overline{\mu}}\\
    \mathcal{T}(Y,X_{\leq -j-1})\ar[d]_{\nu_{-j-1}}\ar[r]^{Q(-)}&\mathcal{T}/\mathcal{T}^{p}(Y,X_{\leq -j-1})\ar[d]^-{\rotatebox{90}{$\sim$}}_{\overline{\nu}_{-j-1}}\\
    \underset{p}{\varinjlim}\,\mathcal{T}(Y,X_{\leq -p})\ar@{-->}[r]^-{\Psi}&\underset{p}{\varinjlim}\,\mathcal{T}/\mathcal{T}^{p}(Y,X_{\leq -p})
    }
\end{align*}
Then, we have
\begin{align*}
\gamma=\overline{\nu}_{-j-1}\circ \overline{\mu}(\alpha)=\overline{\nu}_{-j-1}\circ Q(z\circ f)=\Psi\circ \nu_{-j-1}(z\circ f)
\end{align*}
and so $\Psi$ is surjective.
\end{proof}

\begin{notation}
Given $X$ in $\mathcal{T}$, consider a direct system as in Lemma \ref{lemma_directsystem}. Then, for $i\geq 1$, the triangle in $\mathcal{T}$ extending $\xi_{-i}$ is
\begin{align*}
\xymatrix{
X_{\leq -i}\ar[r]^-{\xi_{-i}}& X_{\leq -i-1}\ar[r]&\Sigma C^X_{-i}\ar[r]&\Sigma X_{\leq -i}.
}
\end{align*}
Note that the object $C^X_{-i}$ depends not only on the object $X$, but also on the choice of the truncation triangles from Remark \ref{remark_cot_structure}.
\end{notation}

\begin{lemma}\label{lemma_Cp}
Given $X\in\mathcal{T}$ and an integer $i\geq 1$, we have that each object $C_{-i}^X$ is in $\mathcal{T}_{-i}:=\mathcal{T}_{\leq -i}\cap\mathcal{T}_{> -i-1}$. In particular, $C_{-i}^X\in\mathcal{T}^{p}$.
\end{lemma}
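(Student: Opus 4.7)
The plan is to verify the two containments $C_{-i}^X \in \mathcal{T}_{\leq -i}$ and $C_{-i}^X \in \mathcal{T}_{>-i-1}$ separately, which by the notation amount to checking that $\mathcal{T}(C_{-i}^X, \Sigma^j\mathcal{S}) = 0$ for $j < i$ and for $j > i$ respectively. The ``In particular'' clause is then automatic since Remark \ref{remark_cot_structure} records $\mathcal{T}_{>-i-1} \subset \mathcal{T}^p$.

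For the first containment, I would rotate the defining triangle of $\Sigma C_{-i}^X$ into
\[
C_{-i}^X \longrightarrow X_{\leq -i} \xrightarrow{\xi_{-i}} X_{\leq -i-1} \longrightarrow \Sigma C_{-i}^X,
\]
apply the cohomological functor $\mathcal{T}(-,\Sigma^j\mathcal{S})$, and invoke the long exact sequence. Since $X_{\leq -i}\in{}^{\perp}(\Sigma^{<i}\mathcal{S})$ and $X_{\leq -i-1}\in{}^{\perp}(\Sigma^{<i+1}\mathcal{S})={}^{\perp}(\Sigma^{\leq i}\mathcal{S})$, both neighbours of $\mathcal{T}(C_{-i}^X,\Sigma^j\mathcal{S})$ in the exact sequence vanish whenever $j<i$, so $\mathcal{T}(C_{-i}^X,\Sigma^j\mathcal{S})=0$ in that range, giving $C_{-i}^X\in\mathcal{T}_{\leq -i}$.

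For the second containment the input is different: I need a triangle whose outer terms lie in the subcategories $\mathcal{T}_{>-i}$ and $\mathcal{T}_{>-i-1}$. To produce one, I would apply the octahedral axiom to the factorisation $X_{>-i}\xrightarrow{\alpha_{-i}}X_{>-i-1}\xrightarrow{f_{-i-1}}X$, where $\alpha_{-i}$ is the comparison map constructed in the proof of Lemma \ref{lemma_directsystem} (satisfying $f_{-i-1}\circ\alpha_{-i}=f_{-i}$). Combined with the two truncation triangles, the octahedron delivers simultaneous triangles
\[
X_{>-i}\xrightarrow{\alpha_{-i}}X_{>-i-1}\longrightarrow C\longrightarrow \Sigma X_{>-i}
\qquad\text{and}\qquad
C\longrightarrow X_{\leq -i}\xrightarrow{\xi_{-i}}X_{\leq -i-1},
\]
and uniqueness of the cone up to (non-canonical) isomorphism lets me identify $C$ with $C_{-i}^X$. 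Feeding the first of these into $\mathcal{T}(-,\Sigma^j\mathcal{S})$ and using $X_{>-i}\in{}^{\perp}(\Sigma^{\geq i}\mathcal{S})$ and $X_{>-i-1}\in{}^{\perp}(\Sigma^{\geq i+1}\mathcal{S})$, the same long exact sequence squeezes $\mathcal{T}(C_{-i}^X,\Sigma^j\mathcal{S})=0$ for $j>i$, so $C_{-i}^X\in\mathcal{T}_{>-i-1}$.

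The only delicate point is the octahedral identification: $C_{-i}^X$ is initially defined as a cone of $\xi_{-i}$ rather than as a cone of $\alpha_{-i}$, and the octahedral axiom is precisely what allows one object to play both roles. Once that identification is in hand, the two orthogonality checks are straightforward exact-sequence squeezes, and the conclusion $C_{-i}^X\in\mathcal{T}_{\leq -i}\cap\mathcal{T}_{>-i-1}=\mathcal{T}_{-i}\subseteq\mathcal{T}^p$ follows at once.
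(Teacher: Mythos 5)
Your proposal is correct and follows essentially the same route as the paper: both arguments hinge on the octahedral axiom applied to the factorisation $f_{-i-1}\circ\alpha_{-i}=f_{-i}$, which realises $C_{-i}^X$ simultaneously as (roughly) a cone on $\alpha_{-i}$ and a (de)suspended cone on $\xi_{-i}$, and then read off membership in $\mathcal{T}_{>-i-1}$ and $\mathcal{T}_{\leq -i}$ from the two resulting triangles. The only cosmetic difference is in how the final step is packaged: you unwind the definitions and squeeze $\mathcal{T}(C_{-i}^X,\Sigma^j\mathcal{S})$ between two vanishing terms in a long exact sequence, whereas the paper invokes closure under extensions of the coaisle $\mathcal{T}_{>-i-1}$ and (after one rotation) of $\mathcal{T}_{\leq -i-1}$ directly; for a co-$t$-structure these are the same fact, so the two write-ups carry identical mathematical content.
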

\begin{proof}
Using Remark \ref{remark_cot_structure} and the octahedral axiom, we have a commutative diagram of triangles in $\mathcal{T}$ of the form
\begin{align*}
   \xymatrix@C=3em{
    X_{> -i}\ar[r]^{f_{-i}}\ar[d]^{\alpha_{-i}}& X\ar[r]^{g_{-i}}\ar@{=}[d]& X_{\leq -i}\ar[r]\ar[d]^{\xi_{-i}}&\Sigma X_{> -i}\ar[d]\\
    X_{> -i-1}\ar[r]^{f_{-i-1}}\ar[d]& X\ar[r]^{g_{-i-1}}\ar[d]& X_{\leq -i-1}\ar[r]\ar[d]&\Sigma X_{> -i-1}\ar[d]\\
    C_{-i}^X\ar[r]\ar[d]&0\ar[r]\ar[d]& \Sigma C_{-i}^X\ar@{=}[r]\ar[d]&\Sigma C_{-i}^X\ar[d]\\
    \Sigma X_{> -i}\ar[r]&\Sigma X\ar[r]& \Sigma X_{\leq -i}\ar[r]&\Sigma^2 X_{> -i}.
    }
\end{align*}
Note that
\begin{align*}
X_{>-i-1}\in \mathcal{T}_{>-i-1}
\text{ and } \Sigma X_{>-i}\in \Sigma \mathcal{T}_{>-i}= \mathcal{T}_{>-i-1} .
\end{align*}
Since $\mathcal{T}_{>-i-1}$ is closed under extensions, we have that $C^X_{-i}\in \mathcal{T}_{>-i-1}\subseteq \mathcal{T}^p$. Moreover, note that
\begin{align*}
X_{\leq -i-1}\in \mathcal{T}_{\leq -i-1} \text{ and } \Sigma X_{\leq -i}\in \Sigma\mathcal{T}_{\leq -i}=\mathcal{T}_{\leq -i-1}.
\end{align*}
Since $\mathcal{T}_{\leq -i-1}$ is closed under extensions, we have that $\Sigma C^X_{-i}\in \mathcal{T}_{\leq -i-1}=\Sigma\mathcal{T}_{\leq -i}$ and so $C^X_{-i}\in\mathcal{T}_{\leq -i}$.
\end{proof}

\begin{lemma}\label{lemma_pbig}
Let $X$ and $Y$ be objects in $\mathcal{T}$, then for each possible $C_{-p}^X$, we have that $\mathcal{T}(C_{-p}^X,Y)=0$ for $p\gg 0$.
\end{lemma}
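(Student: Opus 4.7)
The plan is to combine the strong left orthogonality of $C^X_{-p}$ established in Lemma \ref{lemma_Cp} with the boundedness of the $t$-structure on $\mathcal{T}$, reducing the statement to a finite check against shifts of $\mathcal{S}$. First I would unpack what Lemma \ref{lemma_Cp} gives: since $C^X_{-p}\in\mathcal{T}_{\leq -p}={}^{\perp}(\Sigma^{<p}\mathcal{S})$, the key input is
\[
\mathcal{T}(C^X_{-p},\Sigma^{j}S')=0 \quad\text{for every }S'\in\mathcal{S}\text{ and every integer }j<p.
\]
(The companion inclusion $C^X_{-p}\in\mathcal{T}_{>-p-1}$ also gives the same vanishing for $j>p$, but it is not needed here.)

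Next I would invoke the fact, recalled at the end of Remark \ref{remark_cot_structure}, that $(\mathcal{T}^{\leq 0},\mathcal{T}^{\geq 0})$ is a bounded $t$-structure on $\mathcal{T}$ whose heart has $\mathcal{S}$ as its SMC. Boundedness means there are integers $a\leq b$ with $Y\in\mathcal{T}^{\leq b}\cap\mathcal{T}^{\geq a}$, and so, using the notation of the paper, $Y\in\langle \Sigma^{-b}\mathcal{S},\,\Sigma^{-b+1}\mathcal{S},\,\ldots,\,\Sigma^{-a}\mathcal{S}\rangle$. In particular $Y$ fits into a \emph{finite} tower of triangles
\[
0=Y_{0}\to Y_{1}\to\cdots\to Y_{n}=Y,\qquad \mathrm{cone}(Y_{r-1}\to Y_{r})\cong \Sigma^{j_{r}}S'_{r},
\]
with each $S'_{r}\in\mathcal{S}$ and each $j_{r}\in[-b,-a]$.

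Finally I would choose $p$ with $p>-a$. Then each shift $j_{r}$ appearing in the filtration satisfies $j_{r}\leq -a<p$, and by the first step $\mathcal{T}(C^X_{-p},\Sigma^{j_{r}}S'_{r})=0$ for every $r$. Applying $\mathcal{T}(C^X_{-p},-)$ to the defining triangles of the tower and using the resulting long exact sequences, an induction on $r$ yields $\mathcal{T}(C^X_{-p},Y_{r})=0$ at every stage, and hence $\mathcal{T}(C^X_{-p},Y)=0$. The main obstacle is precisely Step~2, namely extracting a finite-filtration description of an arbitrary $Y\in\mathcal{T}$ from boundedness of the $t$-structure; once this is in hand the remainder is a formal dévissage, and the argument is automatically uniform in the chosen $C^X_{-p}$ since only its membership in $\mathcal{T}_{\leq -p}$ is used.
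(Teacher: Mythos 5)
Your proof is correct and hinges on the same key input as the paper's: boundedness of the $t$-structure $(\mathcal{T}^{\leq i},\mathcal{T}^{>i})$, which lets you put $Y$ in a fixed coaisle while $C^X_{-p}$ drifts into the complementary aisle as $p$ grows. The only difference is cosmetic: the paper concludes immediately from the orthogonality $\mathcal{T}(\mathcal{T}^{\leq i},\mathcal{T}^{>i})=0$ of the torsion pair (after identifying $\mathcal{T}_{\leq -p}\subseteq\mathcal{T}_{\leq i}=\mathcal{T}^{\leq i}$), whereas you unpack that orthogonality into a finite $\mathcal{S}$-filtration of $Y$ and run the dévissage by hand.
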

\begin{proof}
By Remark \ref{remark_cot_structure}, $\mathcal{T}= \mathcal{T}^{\leq j}\perp \mathcal{T}^{>j}$ is a bounded $t$-structure for any integer $j$ and so there is an integer $i$ such that $Y\in\mathcal{T}^{>i}$. Pick an integer $p\geq -i$, so that $-p\leq i$. Then, by Lemma \ref{lemma_Cp} we have that
\begin{align*}
C^X_{-p}\in\mathcal{T}_{\leq -p}\subseteq \mathcal{T}_{\leq i}=\mathcal{T}^{\leq i},
\end{align*}
where the last equality holds by Remark \ref{remark_cot_structure}. As $\mathcal{T}= (\mathcal{T}^{\leq i},\mathcal{T}^{>i})$ is a torsion pair, we then have that $\mathcal{T}(C^{X}_{-p}, Y)=0$.
\end{proof}

\begin{lemma}\label{lemma_X_thick}
Let $X$ and $Y$ be objects in $\mathcal{T}$ and let $q\gg 0$ be an integer. We have that $\mathcal{T}(X_{\leq -q}, Y)=0$ for each choice of $X_{\leq -q}$ from the construction in Remark \ref{remark_cot_structure}.
\end{lemma}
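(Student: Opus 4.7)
The plan is to mimic the argument of Lemma \ref{lemma_pbig} almost verbatim, since $X_{\leq -q}$ lies in the same aisle $\mathcal{T}_{\leq -q}$ as the objects $C^X_{-p}$ did in that lemma. The key input is the bounded $t$-structure $\mathcal{T} = \mathcal{T}^{\leq j} \perp \mathcal{T}^{> j}$ recorded at the end of Remark \ref{remark_cot_structure}, together with the identification $\mathcal{T}^{\leq j} = \mathcal{T}_{\leq j}$.

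First I would use boundedness of the $t$-structure to choose, for the given $Y$, an integer $i$ with $Y \in \mathcal{T}^{> i}$. Next, I would take any $q \geq -i$, so that $-q \leq i$. Then, by the definition of the truncation triangle in Remark \ref{remark_cot_structure}, we have $X_{\leq -q} \in \mathcal{T}_{\leq -q}$, and since the aisles of a co-$t$-structure are nested ($\mathcal{T}_{\leq -q} \subseteq \mathcal{T}_{\leq i}$ for $-q \leq i$), this gives
\begin{align*}
X_{\leq -q} \in \mathcal{T}_{\leq -q} \subseteq \mathcal{T}_{\leq i} = \mathcal{T}^{\leq i}.
\end{align*}

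Finally, I would invoke the orthogonality of the bounded $t$-structure $(\mathcal{T}^{\leq i}, \mathcal{T}^{> i})$ to conclude that
\begin{align*}
\mathcal{T}(X_{\leq -q}, Y) \subseteq \mathcal{T}(\mathcal{T}^{\leq i}, \mathcal{T}^{> i}) = 0.
\end{align*}
This holds for every choice of $X_{\leq -q}$ fitting into a truncation triangle, since the containment $X_{\leq -q} \in \mathcal{T}_{\leq -q}$ is forced by the definition and does not depend on the particular truncation chosen. There is no real obstacle here; the whole argument is a direct transcription of Lemma \ref{lemma_pbig} with $X_{\leq -q}$ in place of $C^X_{-p}$, the only substantive point being that the relevant containment $X_{\leq -q} \in \mathcal{T}^{\leq -q}$ is built into the notation, whereas the analogous containment for $C^X_{-p}$ required the separate calculation carried out in Lemma \ref{lemma_Cp}.
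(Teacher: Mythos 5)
Your proof is correct and follows exactly the same path as the paper's: invoke the bounded $t$-structure $\mathcal{T}=\mathcal{T}^{\leq i}\perp\mathcal{T}^{>i}$ from Remark \ref{remark_cot_structure} to place $Y\in\mathcal{T}^{>i}$, note $X_{\leq -q}\in\mathcal{T}_{\leq -q}\subseteq\mathcal{T}_{\leq i}=\mathcal{T}^{\leq i}$ for $q\geq -i$, and conclude by orthogonality. Your remark that this is a direct transcription of Lemma \ref{lemma_pbig}, with the containment here given by definition rather than needing Lemma \ref{lemma_Cp}, is accurate.
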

\begin{proof}
By Remark \ref{remark_cot_structure}, $\mathcal{T}= \mathcal{T}^{\leq j}\perp \mathcal{T}^{>j}$ is a bounded $t$-structure for any integer $j$ and so there is an integer $i$ such that $Y\in\mathcal{T}^{>i}$.  Pick an integer $q\geq -i$, so that $-q\leq i$. Then, 
\begin{align*}
X_{\leq -q}\in \mathcal{T}_{\leq -q}\subseteq \mathcal{T}_{\leq i}=\mathcal{T}^{\leq i},
\end{align*}
 where the last equality holds by Remark \ref{remark_cot_structure}. 
As $\mathcal{T}= (\mathcal{T}^{\leq i},\mathcal{T}^{>i})$ is a torsion pair, we then have that $\mathcal{T}(X_{\leq -q}, Y)=0$.
\end{proof}

\begin{theorem}\label{thm_directstab}
Let $X$ and $Y$ be objects in $\mathcal{T}$.
\begin{enumerate}[label=(\alph*)]
\item For $p\gg0$, each direct system
\begin{align*}
\xymatrix@C=3em{
    \mathcal{T}(Y,X)\ar[r]&\mathcal{T}(Y,X_{\leq 0})\ar[r]&\mathcal{T}(Y,X_{\leq -1})\ar[r]&\cdots\ar[r]&\mathcal{T}(Y,X_{\leq -p})\ar[r]&\cdots
    }
\end{align*}
stabilizes. Moreover, we have that
    $\underset{p}{\varinjlim}\,\mathcal{T}(Y,X_{\leq -p})\cong \mathcal{T}/\mathcal{T}^{p}(Y, X)$.
\item Fixing truncation triangles for X and Y, we have that
    \begin{align*}
       \underset{q}{\varprojlim} \Big(\underset{p}{\varinjlim}\,\mathcal{T}(Y_{\leq -q},\Sigma X_{> -p})\Big)\cong \mathcal{T}/\mathcal{T}^{p}(Y,X).
    \end{align*}
\end{enumerate}
\end{theorem}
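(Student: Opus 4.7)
My plan is to deduce both parts of the theorem from the preceding lemmas, using Serre duality (RS1) as the key new ingredient for the stabilization in (a) and reducing (b) to (a) through a connecting-homomorphism argument.

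For part (a), the identification $\varinjlim_p \mathcal{T}(Y, X_{\leq -p}) \cong \mathcal{T}/\mathcal{T}^p(Y, X)$ is essentially automatic: Lemma \ref{lemma_iso_quotient_cat} makes each $\xi_{-p}$ an isomorphism in the quotient, so $\varinjlim_p \mathcal{T}/\mathcal{T}^p(Y, X_{\leq -p}) \cong \mathcal{T}/\mathcal{T}^p(Y, X)$, and Lemma \ref{lemma_psi_iso} transports this colimit along $\Psi$ to $\varinjlim_p \mathcal{T}(Y, X_{\leq -p})$. For the stabilization, I apply $\mathcal{T}(Y, -)$ to the triangle $C^X_{-p} \to X_{\leq -p} \xrightarrow{\xi_{-p}} X_{\leq -p-1} \to \Sigma C^X_{-p}$ from the construction of $C^X_{-p}$; the resulting long exact sequence reduces the iso-ness of $(\xi_{-p})_*$ at step $p$ to the simultaneous vanishing of $\mathcal{T}(Y, C^X_{-p})$ and $\mathcal{T}(Y, \Sigma C^X_{-p})$. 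Since Lemma \ref{lemma_Cp} places $C^X_{-p} \in \mathcal{T}^p$, (RS1) rewrites these as $D\mathcal{T}(\mathbb{S}^{-1} C^X_{-p}, Y)$ and $D\mathcal{T}(\Sigma \mathbb{S}^{-1} C^X_{-p}, Y)$. I then mimic the argument of Lemma \ref{lemma_pbig}: for $p \gg 0$, $\mathbb{S}^{-1} C^X_{-p}$ sits sufficiently low in the $t$-structure that the torsion pair $(\mathcal{T}^{\leq i}, \mathcal{T}^{> i})$ applied against the bounded object $Y$ kills these Hom-spaces.

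For part (b), I apply $\mathcal{T}(Y_{\leq -q}, -)$ to the truncation triangle $X_{> -p} \to X \to X_{\leq -p} \to \Sigma X_{> -p}$ and use Lemma \ref{lemma_X_thick} (with $Y_{\leq -q}$ in the first slot against $X$ and against $\Sigma X$ respectively) to kill the outer terms of the long exact sequence for $q \gg 0$. The resulting connecting isomorphism $\mathcal{T}(Y_{\leq -q}, X_{\leq -p}) \cong \mathcal{T}(Y_{\leq -q}, \Sigma X_{> -p})$ reduces the inner colimit to $\varinjlim_p \mathcal{T}(Y_{\leq -q}, X_{\leq -p})$, which by part (a) applied with $Y_{\leq -q}$ in place of $Y$, together with $\mathcal{T}/\mathcal{T}^p(Y_{\leq -q}, X) \cong \mathcal{T}/\mathcal{T}^p(Y, X)$ from Lemma \ref{lemma_iso_quotient_cat}, equals $\mathcal{T}/\mathcal{T}^p(Y, X)$ for all $q \gg 0$. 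The transition maps of the outer inverse system are induced by the $\xi_{-q}$, which become isomorphisms in $\mathcal{T}/\mathcal{T}^p$, so its tail is constant up to canonical isomorphism and $\varprojlim_q(\cdots) \cong \mathcal{T}/\mathcal{T}^p(Y, X)$.

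The main obstacle is the vanishing of $\mathcal{T}(Y, C^X_{-p})$ for $p \gg 0$: the $t$-structure torsion pair vanishes Hom from the aisle to the coaisle only (as used in Lemmas \ref{lemma_pbig} and \ref{lemma_X_thick}), not in the opposite direction, which is why one must pass through (RS1) to the dual $\mathcal{T}(\mathbb{S}^{-1} C^X_{-p}, Y)$. In the $(-d)$-Calabi--Yau case, $\mathbb{S}^{-1} = \Sigma^d$ shifts the co-$t$-structure and places $\mathbb{S}^{-1} C^X_{-p}$ in $\mathcal{T}^{\leq -p-d}$, so the bound is immediate; for a general SMC quadruple the position of $\mathbb{S}^{-1} C^X_{-p}$ in the $t$-structure has to be tracked using the triangle equivalence $\mathbb{S}: \mathcal{T}^p \to \mathcal{T}^p$ combined with \cite[Lemma 4.9]{JH}.
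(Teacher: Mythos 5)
Your treatment of the colimit identification in part (a) --- via Lemmas \ref{lemma_iso_quotient_cat} and \ref{lemma_psi_iso} --- and the whole of part (b) coincide with the paper's argument. The substantive difference, and where your proposal has a gap in the stated generality, is the stabilization step of (a). The paper dualises keeping $C^X_{-i}$ in the covariant slot of (RS1), writing $\mathcal{T}(Y, C^X_{-i}) \cong D\mathcal{T}(C^X_{-i}, \mathbb{S}Y)$ and $\mathcal{T}(Y, \Sigma C^X_{-i}) \cong D\mathcal{T}(C^X_{-i}, \Sigma^{-1}\mathbb{S}Y)$; since Lemma \ref{lemma_pbig} vanishes $\mathcal{T}(C^X_{-p}, Z)$ for $p\gg 0$ for an \emph{arbitrary} $Z\in\mathcal{T}$, it applies verbatim with $Z=\mathbb{S}Y$ and $Z=\Sigma^{-1}\mathbb{S}Y$ and the stabilization is immediate. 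You instead move $\mathbb{S}^{-1}$ onto the object, writing $D\mathcal{T}(\mathbb{S}^{-1}C^X_{-p}, Y)$, and must then show $\mathbb{S}^{-1}C^X_{-p}$ descends unboundedly in the $t$-structure as $p\to\infty$.

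As you yourself note, that descent is immediate when $\mathbb{S}=\Sigma^{-d}$, but Theorem \ref{thm_directstab} is proved under the general SMC-quadruple setup where $\mathbb{S}$ is only a triangle autoequivalence preserving $\mathcal{T}^p$, and your proposal to ``track the position'' via \cite[Lemma 4.9]{JH} is not an argument: that lemma gives boundedness for a single object of $\mathcal{T}^p$, whereas you need a uniform estimate over the whole family $C^X_{-p}$. The gap is fillable --- since the $t$-structure is bounded one has $\mathbb{S}S\in\mathcal{T}^{>M}$ for some $M$, and then $C^X_{-p}\in\mathcal{T}_{\leq -p}$ forces $\mathcal{T}(\mathbb{S}^{-1}C^X_{-p},\Sigma^{a}S)\cong\mathcal{T}(C^X_{-p},\Sigma^{a}\mathbb{S}S)=0$ for $a\leq M+p$, hence $\mathbb{S}^{-1}C^X_{-p}\in\mathcal{T}_{\leq -M-p-1}$ --- but you did not supply this, and the paper's choice of which side of (RS1) to dualise sidesteps the issue entirely. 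You should rewrite the stabilization using the paper's form of the duality.
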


\begin{proof}
(a) Consider any triangle $\xymatrix{C^X_{-i}\ar[r]& X_{\leq -i}\ar[r]^-{\xi_{-i}}& X_{\leq -i-1}\ar[r]& \Sigma C^X_{-i}}$. Applying $\mathcal{T}(Y,-)$ to it, we obtain the exact sequence
\begin{align*}
\xymatrix{
\mathcal{T}(Y, C^X_{-i})\ar[r]& \mathcal{T}(Y, X_{\leq -i})\ar[r]&\mathcal{T}(Y, X_{\leq -i-1})\ar[r]&\mathcal{T}(Y, \Sigma C^X_{-i}).
}
\end{align*}
By Lemma \ref{lemma_Cp}, we have that $C_{-i}^X\in\mathcal{T}^{p}$, so that $\mathcal{T}(Y,C_{-i}^X)\cong D\mathcal{T}(C_{-i}^X, \mathbb{S} Y)$ and $\mathcal{T}(Y,\Sigma C_{-i}^X)\cong D\mathcal{T}(C_{-i}^X, \Sigma^{-1}\mathbb{S}Y)$ by (RS1). For $i\gg 0$, by Lemma \ref{lemma_pbig}, we have that $\mathcal{T}(Y, X_{\leq -i})\cong \mathcal{T}(Y, X_{\leq -i-1})$. Hence the direct system stabilizes as claimed and for $i\gg0$ we have that
\begin{align*}
\underset{p}{\varinjlim}\,\mathcal{T}(Y, X_{\leq -p})\cong \mathcal{T}(Y,X_{\leq -i}).
\end{align*}
Then we have
\begin{align*}
    \underset{p}{\varinjlim}\,\mathcal{T}(Y,X_{\leq -p})\cong \underset{p}{\varinjlim}\,\mathcal{T}/\mathcal{T}^{p}(Y,X_{\leq -p})\cong 
    \mathcal{T}/\mathcal{T}^{p}(Y,X),
\end{align*}
where the first isomorphism holds by Lemma \ref{lemma_psi_iso} and the second one holds because all the Hom-spaces in the direct system over $\mathcal{T}/\mathcal{T}^p$ are isomorphic and isomorphic to the direct limit.

(b) Consider a truncation triangle $\xymatrix{X_{>-p}\ar[r]& X\ar[r]& X_{\leq -p}\ar[r]& \Sigma X_{>-p}}$. Applying $\mathcal{T}(Y_{\leq -q},-)$ to it, we obtain the exact sequence
\begin{align}\label{diagram_proof1}
\xymatrix{
\mathcal{T}(Y_{\leq -q}, X)\ar[r]&\mathcal{T}(Y_{\leq -q}, X_{\leq -p})\ar[r]& \mathcal{T}(Y_{\leq -q}, \Sigma X_{>-p})\ar[r]& \mathcal{T}(Y_{\leq -q}, \Sigma X).
}
\end{align}
Taking the direct limit of (\ref{diagram_proof1}) with respect to $p$, we obtain the exact sequence
\begin{align}\label{diagram_proof2}
\xymatrix{
\mathcal{T}(Y_{\leq -q}, X)\ar[r]&\mathcal{T}/\mathcal{T}^p(Y, X)\ar[r]&\underset{p}{\varinjlim}\, \mathcal{T}(Y_{\leq -q}, \Sigma X_{>-p})\ar[r]& \mathcal{T}(Y_{\leq -q}, \Sigma X).
}
\end{align}
In the above, the first and last terms are unchanged since they do not depend on $p$. Moreover, for the second term, we observed that
\begin{align*}
\underset{p}{\varinjlim}\,\mathcal{T}(Y_{\leq -q}, X_{\leq -p})\cong \mathcal{T}/\mathcal{T}^p (Y_{\leq -q}, X)\cong \mathcal{T}/\mathcal{T}^p (Y, X),
\end{align*}
where we used part (a) for the first isomorphism and Lemma \ref{lemma_iso_quotient_cat}. For $q\gg 0$, by Lemma \ref{lemma_X_thick}, we have that
\begin{align*}
\mathcal{T}(Y_{\leq -q},X)=0 \text{ and } \mathcal{T}(Y_{\leq -q},\Sigma X)=0.
\end{align*}
Hence, for $q\gg 0$, (\ref{diagram_proof2}) becomes
\begin{align*}
\xymatrix{
0\ar[r]&\mathcal{T}/\mathcal{T}^p(Y, X)\ar[r]^-{\sim}&\underset{p}{\varinjlim}\, \mathcal{T}(Y_{\leq -q}, \Sigma X_{>-p})\ar[r]& 0.
}
\end{align*}
By \cite[Exercise 5.22(ii)]{Rot}, chopping off the tail of an inverse system, we obtain the same inverse limit. Hence, we have that
\begin{align*}
\underset{q}{\varprojlim} \Big(\underset{p}{\varinjlim}\,\mathcal{T}(Y_{\leq -q},\Sigma X_{> -p})\Big)\cong \mathcal{T}/\mathcal{T}^{p}(Y,X).
\end{align*}
\end{proof}

\section{Applications of our Theorem}\label{section_application}
In this section, we work in the following more specific setup.
\begin{setup}
Let $(\mathcal{T}, \mathcal{T}^p,\mathcal{S})$ be a $(-d)$-CY triple.
\end{setup}

When $Y$ is a shift of an object in $\mathcal{S}$, we can say more about when the direct system in Theorem \ref{thm_directstab}(a) stabilizes and we obtain the following result.

\begin{corollary}\label{coro_YisS}
Let $X\in \mathcal{T}$ and $Y\in\mathcal{S}$. Then, for any integer $j$ and any choice of truncation triangles for $X$, we have that
\begin{align*}
\mathcal{T}/\mathcal{T}^p(\Sigma^j Y, X)\cong \mathcal{T}(\Sigma^j Y, X_{\leq -j+d-1}).
\end{align*}
\end{corollary}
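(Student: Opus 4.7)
The strategy is to refine Theorem~\ref{thm_directstab}(a) by identifying precisely where the direct system $\{\mathcal{T}(\Sigma^j S, X_{\leq -p})\}$ stabilises when $Y=\Sigma^j S$. Theorem~\ref{thm_directstab}(a) already gives $\mathcal{T}/\mathcal{T}^p(\Sigma^j S, X) \cong \underset{p}{\varinjlim}\,\mathcal{T}(\Sigma^j S, X_{\leq -p})$, so it suffices to show that the transition map $\xi_{-i,*}\colon \mathcal{T}(\Sigma^j S, X_{\leq -i}) \to \mathcal{T}(\Sigma^j S, X_{\leq -i-1})$ is an isomorphism for every $i\geq j-d+1$; the stable value is then represented by the term at index $-(j-d+1)=-j+d-1$.

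For this I would apply $\mathcal{T}(\Sigma^j S,-)$ to the triangle $C^X_{-i}\to X_{\leq -i}\xrightarrow{\xi_{-i}}X_{\leq -i-1}\to \Sigma C^X_{-i}$ and read off that $\xi_{-i,*}$ becomes an isomorphism as soon as both $\mathcal{T}(\Sigma^j S, C^X_{-i})$ and $\mathcal{T}(\Sigma^j S, \Sigma C^X_{-i})$ vanish. By Lemma~\ref{lemma_Cp}, $C^X_{-i}\in\mathcal{T}^p$, hence so does its shift $\Sigma^d C^X_{-i}$, since $\mathcal{T}^p$ is thick. Plugging $\Sigma^d C^X_{-i}$ into the relative Serre duality in (RS1) with $\mathbb{S}=\Sigma^{-d}$ rewrites the two Hom-spaces as
\begin{align*}
\mathcal{T}(\Sigma^j S, C^X_{-i}) &\cong D\mathcal{T}(C^X_{-i}, \Sigma^{j-d}S), \\
\mathcal{T}(\Sigma^j S, \Sigma C^X_{-i}) &\cong D\mathcal{T}(C^X_{-i}, \Sigma^{j-d-1}S).
\end{align*}
The second half of Lemma~\ref{lemma_Cp} then places $C^X_{-i}$ in $\mathcal{T}_{\leq -i}={}^\perp(\Sigma^{<i}\mathcal{S})$, which forces both of the above spaces to be zero whenever $j-d<i$, i.e.\ whenever $i\geq j-d+1$. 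This is the desired stabilisation, and hence $\underset{p}{\varinjlim}\,\mathcal{T}(\Sigma^j S,X_{\leq -p})\cong \mathcal{T}(\Sigma^j S, X_{\leq -j+d-1})$.

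I do not expect a serious obstacle: every ingredient (Theorem~\ref{thm_directstab}(a), Lemma~\ref{lemma_Cp}, and (RS1) specialised to $\mathbb{S}=\Sigma^{-d}$) is already in place, and the only real work is the bookkeeping of the threshold $i=j-d+1$. One mildly delicate point is the case $-j+d-1\geq 0$, where $X_{\leq -j+d-1}$ is not literally one of the terms listed in Lemma~\ref{lemma_directsystem}; the remedy is that the same vanishing argument shows that for every integer $m\leq -j+d-1$ the map $X_{\leq m}\to X_{\leq m-1}$ induces an isomorphism on $\mathcal{T}(\Sigma^j S,-)$, so $\mathcal{T}(\Sigma^j S, X_{\leq -j+d-1})$ agrees with the stable value of the direct system regardless of the sign of $-j+d-1$.
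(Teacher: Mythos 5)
Your proposal is correct and follows essentially the same route as the paper: apply $\mathcal{T}(\Sigma^j S,-)$ to the triangle built on $\xi_{-i}$, use Lemma~\ref{lemma_Cp} to place $C^X_{-i}$ in $\mathcal{T}^p\cap\mathcal{T}_{\leq -i}$, and invoke (RS1) with $\mathbb{S}=\Sigma^{-d}$ to get the vanishing for $i\geq j-d+1$. Your closing remark about the case $-j+d-1\geq 0$ is a valid (and welcome) clarification, but the paper covers it implicitly by stating the vanishing argument for arbitrary integers $l$ rather than only $l\geq 0$.
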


\begin{proof}
By Theorem \ref{thm_directstab}(a), for $p\gg 0$, we have that $\mathcal{T}(\Sigma^j Y,X_{\leq -p})\cong \mathcal{T}/\mathcal{T}^{p}(\Sigma^j Y, X)$. We show that $p=j-d+1$ is big enough. For any integer $l$, consider a truncation triangle
\begin{align*}
\xymatrix{
X_{\leq -l}\ar[r]^{\xi_{-l}}& X_{\leq -l-1}\ar[r]& \Sigma C^X_{-l}\ar[r]& \Sigma X_{\leq -l},
}
\end{align*}
where $C^X_{-l}\in\mathcal{T}_{-l}$ by Lemma \ref{lemma_Cp}.
Applying $\mathcal{T}(\Sigma^j Y,-)$ to this triangle, we obtain the exact sequence
\begin{align*}
\xymatrix{
\mathcal{T}(\Sigma^j Y, C^X_{-l})\ar[r]&
\mathcal{T}(\Sigma^j Y, X_{\leq -l})\ar[r]^\alpha&
\mathcal{T}(\Sigma^j Y, X_{\leq -l-1})\ar[r]&
\mathcal{T}(\Sigma^j Y, \Sigma C^X_{-l}).
}
\end{align*}
Since $C^X_{-l}\in\mathcal{T}^p$, we have that
\begin{align*}
\mathcal{T}(\Sigma^j Y, C^X_{-l})\cong D\mathcal{T}(C^X_{-l}, \Sigma^{j-d} Y) \text{ and }
\mathcal{T}(\Sigma^j Y, \Sigma C^X_{-l})\cong D\mathcal{T}(C^X_{-l}, \Sigma^{j-d-1} Y).
\end{align*}
Moreover, since $C^{X}_{-l}\in\mathcal{T}_{\leq -l}$, we have that $\mathcal{T}(C^X_{-l}, \Sigma^{<l} \mathcal{S})=0$ and so $\mathcal{T}(C^X_{-l}, \Sigma^{<l} Y)=0$. Hence, if $l\geq j-d+1$, the morphism $\alpha$ is an isomorphism.
\end{proof}

Moreover, if we also fix $X$ to be an object in $\mathcal{S}$, then Corollary \ref{coro_YisS} has the following important special case.
\begin{corollary}\label{coro_prop(2)}
Let $X$ and $Y$ be objects in $
\mathcal{S}$. Then, we have that $\mathcal{T}/\mathcal{T}^p(\Sigma^j Y, X)=0$ for $1\leq j\leq d$.
\end{corollary}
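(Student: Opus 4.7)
The plan is to invoke Corollary~\ref{coro_YisS} with $X=S$ to reduce the statement to showing
\[
    \mathcal{T}(\Sigma^j S, S_{\leq -j+d-1}) = 0 \quad \text{for } 1 \leq j \leq d,
\]
and then to analyse the right-hand side by splitting on the sign of $-j+d-1$. The key background observation is that $S \in \mathcal{T}_{\leq 0}$, since $\mathcal{T}(S,\Sigma^{<0}\mathcal{S})=0$ by SMC property (1); consequently, for every $m\geq 0$ the trivial co-$t$-structure truncation $S_{>m}=0$, $S_{\leq m}=S$ is a valid choice.

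For the first case $1 \leq j \leq d-1$ we have $-j+d-1 \geq 0$, so the trivial choice above lets me take $S_{\leq -j+d-1}=S$. The Hom-space then becomes $\mathcal{T}(\Sigma^j S, S) \cong \mathcal{T}(S, \Sigma^{-j}S)$, which vanishes by SMC (1) since $-j < 0$.

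For the remaining case $j=d$, where $-j+d-1=-1$ and the trivial truncation fails, I would exploit the first nontrivial step of the direct system from Lemma~\ref{lemma_directsystem} applied to $S$. Since $S_{\leq 0}=S$, the triangle extending $\xi_0$ reads
\[
    S \to S_{\leq -1} \to \Sigma C^S_0 \to \Sigma S,
\]
where by Lemma~\ref{lemma_Cp} the cone satisfies $C^S_0 \in \mathcal{T}_0 = \mathcal{T}_{\leq 0}\cap\mathcal{T}_{>-1} \subseteq \mathcal{T}^p$. Applying $\mathcal{T}(\Sigma^d S, -)$ yields the exact piece
\[
    \mathcal{T}(\Sigma^d S, S) \to \mathcal{T}(\Sigma^d S, S_{\leq -1}) \to \mathcal{T}(\Sigma^d S, \Sigma C^S_0),
\]
whose left term is $\mathcal{T}(S, \Sigma^{-d}S)=0$ by SMC (1). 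For the right term, after one shift the Serre duality (RS1) applied to $C^S_0 \in \mathcal{T}^p$ rewrites it as $D\mathcal{T}(C^S_0, \Sigma^{-1}S)$; this is zero because $C^S_0 \in \mathcal{T}_{\leq 0} = {}^{\perp}(\Sigma^{<0}\mathcal{S})$ kills $\Sigma^{-1}S$. The middle term $\mathcal{T}(\Sigma^d S, S_{\leq -1})$ is then forced to vanish. The only mild obstacle in the whole argument is exactly this borderline case: one must notice that the cone $C^S_0$ of the very first step of the direct system automatically lies in $\mathcal{T}_{\leq 0}$, so that Serre duality and the orthogonality defining $\mathcal{T}_{\leq 0}$ cooperate to give the required vanishing.
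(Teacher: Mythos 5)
Your proposal is correct and, in substance, follows the same route as the paper: reduce via Corollary~\ref{coro_YisS} to the vanishing of $\mathcal{T}(\Sigma^j S, S_{\leq -j+d-1})$, then split into the cases $1\leq j\leq d-1$ and $j=d$. The only real variation is in how you identify $S_{\leq -j+d-1}$. For $1\leq j\leq d-1$ you invoke the ``any choice of truncation triangles'' clause of Corollary~\ref{coro_YisS} and pick the trivial truncation $S_{>m}=0$, $S_{\leq m}=S$ (legitimate since $S\in\mathcal{T}_{\leq 0}\subseteq\mathcal{T}_{\leq m}$ for $m\geq 0$), whereas the paper starts from an arbitrary truncation with $g$ left minimal and deduces $S_{>-j+d-1}=0$ via right minimality; your route is slightly shorter but yields the same identification $S_{\leq -j+d-1}\cong S$. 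For $j=d$ you work with the cone triangle $S\to S_{\leq -1}\to\Sigma C^S_0\to\Sigma S$ and extract $C^S_0\in\mathcal{T}_{\leq 0}$ from Lemma~\ref{lemma_Cp}, while the paper uses the truncation triangle $S_{>-1}\to S\to S_{\leq -1}$ and shows $S_{>-1}\in\mathcal{T}_{\leq 0}$ by closure under extensions; with the trivial truncation $S_{>0}=0$ these are the same triangle up to rotation ($C^S_0=S_{>-1}$), and both arguments finish with the same Serre duality computation $D\mathcal{T}(\,\cdot\,,\Sigma^{-1}S)=0$. So there is no gap; the differences are cosmetic.
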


\begin{proof}
By Corollary \ref{coro_YisS}, we have that $\mathcal{T}/\mathcal{T}^p(\Sigma^j Y, X)\cong \mathcal{T}(\Sigma^j Y, X_{\leq -j+d-1})$. Consider a truncation triangle
\begin{align*}
\xymatrix{
X_{> -j+d-1}\ar[r]^-{f}& X\ar[r]^-{g}&X_{\leq -j+d-1}\ar[r]&\Sigma X_{> -j+d-1},
}
\end{align*}
where we may assume $g$ is left minimal, see Remark \ref{remark_cot_structure}, and so $f$ is right minimal by \cite[Lemma 2.5]{KH}. If $1\leq j\leq d-1$, then $-j+d-1\geq 0$ and $X_{>-j+d-1}\in\mathcal{T}_{>-j+d-1}\subseteq \mathcal{T}_{>0}$. Since $\mathcal{S}\subseteq\mathcal{T}_{\leq 0}$, $X\in \mathcal{S}$ and $(\mathcal{T}_{> 0},\mathcal{T}_{\leq 0})$ is a torsion pair, we have that $\mathcal{T}(X_{> -j+d-1}, X)=0$. Then $f=0$ and, as it is right minimal, we have that $X_{> -j+d-1}=0$ and $X\cong X_{\leq -j+d-1}$. Hence
\begin{align*}
\mathcal{T}/\mathcal{T}^p(\Sigma^j Y, X)\cong \mathcal{T}(\Sigma^j Y, X_{\leq -j+d-1})\cong \mathcal{T}(\Sigma^j Y, X)\cong \mathcal{T}( Y,\Sigma^{-j} X)=0
\end{align*}
by (RS2), since $-j\leq -1$.

It remains to show that the result holds for the case $j=d$. Note that in each truncation triangle
\begin{align*}
\xymatrix{
X_{> -1}\ar[r]^-{f}& X\ar[r]^-{g}&X_{\leq -1}\ar[r]&\Sigma X_{> -1}
},
\end{align*}
we have that $X$ and $\Sigma^{-1}X_{\leq -1}$ are in $\mathcal{T}_{\leq 0}$. Then, as $\mathcal{T}_{\leq 0}$ is closed under extensions, we have that $X_{>-1}\in\mathcal{T}_{\leq 0}$. Moreover, such a truncation triangle induces the exact sequence
\begin{align*}
\xymatrix{
\mathcal{T}(\Sigma^d Y, X)\ar[r]&
\mathcal{T}(\Sigma^d Y, X_{\leq -1})\ar[r]&
\mathcal{T}(\Sigma^d Y, \Sigma X_{>-1}).
}
\end{align*}
Note that as $Y\in\mathcal{T}_{\leq 0}$, we have that the first term is zero. Moreover, as $\Sigma X_{>-1}\in \mathcal{T}^p$, by (RS1) we have that
\begin{align*}
\mathcal{T}(\Sigma^d Y, \Sigma X_{>-1})\cong D\mathcal{T}(\Sigma^{d+1}X_{>-1}, \Sigma^d Y)\cong D\mathcal{T}(X_{>-1}, \Sigma^{-1} Y)=0,
\end{align*}
where the last equality holds as $X_{>-1}\in\mathcal{T}_{\leq 0}$ by above and $\Sigma^{-1}Y\in\Sigma^{-1}\mathcal{S}$. Hence, we have that $\mathcal{T}(\Sigma^d Y, X_{\leq -1})=0$ and so $\mathcal{T}(\Sigma^d Y, X)=0$ by Corollary \ref{coro_YisS}.
\end{proof}

\begin{lemma}\label{lemma_prop(1)}
We have that dim$\,{}_k \mathcal{T}/\mathcal{T}^p(S_i,S_j)=\delta_{S_iS_j}$ for every $S_i$ and $S_j$ in $\mathcal{S}$.
\end{lemma}

\begin{proof}
By Corollary \ref{coro_YisS}, we have that $\mathcal{T}/\mathcal{T}^p( S_i, S_j)\cong \mathcal{T}( S_i, (S_j)_{\leq d-1}).$
Note that since $\mathcal{S}\subseteq\mathcal{T}_{\leq 0}$, then $S_j\in\mathcal{T}_{\leq 0}$. Consider a truncation triangle
\begin{align*}
\xymatrix{
(S_j)_{>d-1}\ar[r]^-f& S_j\ar[r]&(S_j)_{\leq d-1}\ar[r]& \Sigma (S_j)_{>d-1},
}
\end{align*}
where we may assume $f$ is right minimal.
Note that $(S_j)_{>d-1}\in\mathcal{T}_{>d-1}\subseteq \mathcal{T}_{>0}$ and as $(\mathcal{T}_{>0}, \mathcal{T}_{\leq 0})$ is a torsion pair, we have that $f=0$. As $f$ is right minimal, then $(S_j)_{>d-1}=0$ and $S_j\cong (S_j)_{\leq d-1}$. Then
\begin{align*} 
\mathcal{T}/\mathcal{T}^p(S_i, S_j)\cong \mathcal{T}(S_i, (S_j)_{\leq d-1})\cong \mathcal{T}(S_i, S_j).
\end{align*}
The result then follows because $\mathcal{S}$ is SMC in $\mathcal{T}$ by (RS2).
\end{proof}
\begin{theorem}\label{lemma_negCY}
The category $\mathcal{T}/\mathcal{T}^p$ is Hom-finite and $(-d-1)$-Calabi-Yau.
\end{theorem}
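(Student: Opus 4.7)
The plan is to derive both statements of Theorem \ref{lemma_negCY} from Theorem \ref{thm_directstab}, combined with the Serre-type isomorphism (RS1) and the vanishing Lemma \ref{lemma_X_thick}. Hom-finiteness is essentially immediate: Theorem \ref{thm_directstab}(a) gives $\mathcal{T}/\mathcal{T}^p(Y,X)\cong \mathcal{T}(Y,X_{\leq-p})$ for $p\gg 0$, and the right-hand side is finite-dimensional since $\mathcal{T}$ is Hom-finite by (RS0). The substantive task is to produce the $(-d-1)$-Calabi-Yau property, which I will formulate as a bifunctorial isomorphism
\[
\mathcal{T}/\mathcal{T}^p(Y,\Sigma^{-d-1}X)\;\cong\;D\mathcal{T}/\mathcal{T}^p(X,Y).
\]

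The first step is to rewrite the left-hand side as a Hom-space in $\mathcal{T}$. Applying Theorem \ref{thm_directstab}(b) to $(Y,\Sigma^{-d-1}X)$ requires picking valid truncation triangles for $\Sigma^{-d-1}X$. A short calculation using Remark \ref{remark_cot_structure} (so that $\Sigma\mathcal{T}_{>j}=\mathcal{T}_{>j-1}$ and $\Sigma\mathcal{T}_{\leq j}=\mathcal{T}_{\leq j-1}$) shows that $\Sigma^{-d-1}X_{>-p-d-1}$ is a valid choice for $(\Sigma^{-d-1}X)_{>-p}$. After cofinally reindexing the direct system in $p$, this produces $\mathcal{T}/\mathcal{T}^p(Y,\Sigma^{-d-1}X)\cong \mathcal{T}(Y_{\leq-q},\Sigma^{-d}X_{>-p})$ for $p,q\gg 0$. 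Since $X_{>-p}\in\mathcal{T}_{>-p}\subset\mathcal{T}^p$ and $\mathbb{S}=\Sigma^{-d}$, applying (RS1) with $U=X_{>-p}$ and $V=Y_{\leq-q}$ yields
\[
\mathcal{T}/\mathcal{T}^p(Y,\Sigma^{-d-1}X)\;\cong\; D\mathcal{T}(X_{>-p},Y_{\leq-q}).
\]

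The crucial remaining step is the bridge $\mathcal{T}(X_{>-p},Y_{\leq-q})\cong\mathcal{T}/\mathcal{T}^p(X,Y)$ for $p,q\gg 0$. Here I would apply $\mathcal{T}(-,Y_{\leq-q})$ to the truncation triangle $X_{>-p}\to X\to X_{\leq-p}\to\Sigma X_{>-p}$ and exploit Lemma \ref{lemma_X_thick} with $Y_{\leq-q}$, respectively $\Sigma Y_{\leq-q}$, in the place of its generic second object: for $p\gg 0$ both $\mathcal{T}(X_{\leq-p},Y_{\leq-q})$ and $\mathcal{T}(\Sigma^{-1}X_{\leq-p},Y_{\leq-q})$ vanish, so the long exact sequence collapses to $\mathcal{T}(X,Y_{\leq-q})\cong\mathcal{T}(X_{>-p},Y_{\leq-q})$. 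Theorem \ref{thm_directstab}(a) then identifies $\mathcal{T}(X,Y_{\leq-q})\cong \mathcal{T}/\mathcal{T}^p(X,Y)$ for $q\gg 0$. Taking $D$ of the bridge and composing with the previous display completes the Calabi-Yau isomorphism; bifunctoriality is inherited from (RS1), Theorem \ref{thm_directstab}, and the naturality of the long exact sequence.

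The hard part will be the combinatorial bookkeeping: ensuring simultaneous stabilisation of all limits for $p,q\gg 0$, carefully tracking which argument lands in $\mathcal{T}^p$ so that (RS1) applies in the form needed, and managing the $+d+1$ reindexing caused by $\Sigma^{-d-1}$ interacting with the truncation triangles. Once the identification $\mathcal{T}(X_{>-p},Y_{\leq-q})\cong\mathcal{T}/\mathcal{T}^p(X,Y)$ is isolated as the bridge between the two sides, everything else is a routine chase through Serre duality and Theorem \ref{thm_directstab}.
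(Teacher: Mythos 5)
Your proposal is correct and follows essentially the same route as the paper's proof: both establish Hom-finiteness directly from Theorem \ref{thm_directstab}(a) and (RS0), and both obtain the Calabi–Yau isomorphism by feeding Theorem \ref{thm_directstab}(b) through (RS1) with $\mathbb{S}=\Sigma^{-d}$, then collapsing the resulting long exact sequence via Lemma \ref{lemma_X_thick}, and finally invoking Theorem \ref{thm_directstab}(a) once more. The only differences are cosmetic: the paper keeps the shift $\Sigma^{d+1}$ on $X$ and works explicitly with the double $\varprojlim_q\varinjlim_p$, whereas you put the shift on the left entry (which makes (RS1) apply with no shift on $X_{>-p}$) and phrase the double limit as a stabilised value for $p,q\gg 0$ — this is legitimate but requires the stabilisation you flag as bookkeeping, which is exactly what your ``bridge'' isomorphism $\mathcal{T}(X_{>-p},Y_{\leq-q})\cong\mathcal{T}(X,Y_{\leq-q})$ provides.
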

\begin{proof}
Let $X$ and $Y$ be in $\mathcal{T}$. By Theorem \ref{thm_directstab}(a), for $p\gg 0$ we have that
\begin{align*}
\mathcal{T}/\mathcal{T}^{p}(Y, X)\cong \mathcal{T}(Y,X_{\leq -p})
\end{align*}
and $\mathcal{T}$ is Hom-finite by assumption, see (RS0). Hence $\mathcal{T}/\mathcal{T}^p$ is Hom-finite.

Now let $q$, $p$ be integers. For any choice of $Y_{\leq -q}$, applying the covariant functor $D\mathcal{T}(-, Y_{\leq -q})$ to a shift of a truncation triangle
\begin{align*}
    X_{>-p}\rightarrow X\rightarrow X_{\leq -p}\rightarrow \Sigma X_{>-p},
\end{align*}
we obtain the exact sequence
\begin{align*}
\xymatrix{
D\mathcal{T}(\Sigma^d X_{\leq -p}, Y_{\leq -q})\ar[r]&
D\mathcal{T}(\Sigma^{d+1} X_{> -p}, Y_{\leq -q})\ar[r]&
D\mathcal{T}(\Sigma^{d+1} X, Y_{\leq -q})\ar[r]&
D\mathcal{T}(\Sigma^{d+1} X_{\leq -p}, Y_{\leq -q}).
}
\end{align*}
Taking the direct limit of the above with respect to $p$, we obtain the exact sequence
\begin{align*}
\xymatrix{
0\ar[r]&
\underset{p}{\varinjlim}\, D\mathcal{T}(\Sigma^{d+1} X_{> -p}, Y_{\leq -q})\ar[r]&
D\mathcal{T}(\Sigma^{d+1} X, Y_{\leq -q})\ar[r]&
0,
}
\end{align*}
where
\begin{align*}
\underset{p}{\varinjlim}\,D\mathcal{T}(\Sigma^d X_{\leq -p}, Y_{\leq -q})\cong \underset{p}{\varinjlim}\,D\mathcal{T}( X_{\leq -p}, \Sigma^{-d}Y_{\leq -q})=0
\end{align*}
by Lemma \ref{lemma_X_thick} and similarly the last term is zero. Hence
\begin{align}\label{diagram_proof_CY}
D\mathcal{T}(\Sigma^{d+1} X_{> -p}, Y_{\leq -q})\cong
D\mathcal{T}(\Sigma^{d+1} X, Y_{\leq -q}).
\end{align}
Moreover, by Theorem \ref{thm_directstab}(a) the inverse system
\begin{align*}
\xymatrix{
D\mathcal{T}(\Sigma^{d+1}X, Y)&
D\mathcal{T}(\Sigma^{d+1}X, Y_{\leq 0})\ar[l]&
D\mathcal{T}(\Sigma^{d+1}X, Y_{\leq -1})\ar[l]&\cdots\ar[l]
}
\end{align*}
stabilizes and so
\begin{align}\label{diagram_proof_CY2}
\underset{q}{\varprojlim} D\mathcal{T}(\Sigma^{d+1}X, Y_{\leq -q})\cong D (\underset{q}{\varinjlim}\mathcal{T}(\Sigma^{d+1}X, Y_{\leq -q})).
\end{align}
Hence, we have that
\begin{align*}
\mathcal{T}/\mathcal{T}^p(Y,X) &\cong \underset{q}{\varprojlim}(\underset{p}{\varinjlim}\mathcal{T}(Y_{\leq -q}, \Sigma X_{>-p}))\\
&\cong\underset{q}{\varprojlim}(\underset{p}{\varinjlim}D\mathcal{T}( \Sigma^{d+1} X_{>-p}, Y_{\leq -q}))\\
&\cong \underset{q}{\varprojlim}(D\mathcal{T}( \Sigma^{d+1} X, Y_{\leq -q}))\\
&\cong D (\underset{q}{\varinjlim}\mathcal{T}(\Sigma^{d+1}X, Y_{\leq -q}))\\
&\cong D\mathcal{T}/\mathcal{T}^p(\Sigma^{d+1}X, Y),
\end{align*}
where the first isomorphism follows by Theorem \ref{thm_directstab}(b), the second by (RS1) with $\mathbb{S}=\Sigma^{-d}$, the third by (\ref{diagram_proof_CY}), the fourth by (\ref{diagram_proof_CY2}) and the last one by Theorem \ref{thm_directstab}(a).
\end{proof}

To prove the following, we apply \cite[Lemma 3.1]{IYa}. In order to be able to do this, we first need to check that the required conditions (T0)-(T2) from \cite[Section 1.2]{IYa} hold. Note that these conditions have also been checked by Jin in the proof of \cite[Theorem 4.5]{JH}. Even if both us and Jin rely on results by Iyama and Yang from \cite{IYa},  our arguments differ as Jin uses the main theorem while we apply another lemma.

\begin{lemma}\label{lemma_prop(3)}
We have that
\begin{align*}
\mathcal{T}/\mathcal{T}^p\cong
\langle \mathcal{S}\rangle * \Sigma^{-1}\langle\mathcal{S}\rangle*\dots* \Sigma^{1-d}\langle\mathcal{S}\rangle* \Sigma^{-d} \langle\mathcal{S}\rangle.
\end{align*}
\end{lemma}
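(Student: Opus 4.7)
The plan is to invoke \cite[Lemma 3.1]{IYa}, after verifying that the hypotheses (T0)--(T2) of \cite[Section 1.2]{IYa} hold in our setup. The data we feed in is the $t$-structure $(\mathcal{T}^{\leq 0}, \mathcal{T}^{>0})$ with heart $\langle \mathcal{S}\rangle$, the co-$t$-structure $(\mathcal{T}_{>0}, \mathcal{T}_{\leq 0})$ satisfying $\mathcal{T}^{\leq 0}=\mathcal{T}_{\leq 0}$ by Remark~\ref{remark_cot_structure}, and the thick subcategory $\mathcal{T}^p$.

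First I would verify the three conditions. Condition (T0), that $\mathcal{T}$ is a Hom-finite Krull--Schmidt triangulated category, is immediate from (RS0). Condition (T1), which concerns the compatibility between the $t$-structure and co-$t$-structure together with their shift behaviour, follows from the explicit description of both structures in our Notation together with the shift rules $\mathcal{T}_{>i+1}\subseteq \mathcal{T}_{>i}$ and $\mathcal{T}_{\leq i}\subseteq \mathcal{T}_{\leq i+1}$ recorded in Remark~\ref{remark_cot_structure}. Condition (T2) positions $\mathcal{T}^p$ suitably with respect to both structures: the inclusion $\mathcal{T}_{>0}\subset \mathcal{T}^p$ from (RS2), combined with \cite[Lemma 4.9]{JH} (which for any $Z\in \mathcal{T}^p$ gives an integer $N$ with $\mathcal{T}(Z, \Sigma^{>N}\mathcal{S})=0$, i.e., $Z \in \mathcal{T}_{>-N}$), supplies the required containments. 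As the author notes, these verifications have already been carried out by Jin in the proof of \cite[Theorem 4.5]{JH}, so they can be quoted here.

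With the hypotheses verified, \cite[Lemma 3.1]{IYa} yields the desired filtration: every $X\in\mathcal{T}$ is isomorphic in $\mathcal{T}/\mathcal{T}^p$ to an iterated extension of bounded shifts of the heart $\langle \mathcal{S}\rangle$. That precisely the shifts $\Sigma^{0}, \Sigma^{-1},\ldots,\Sigma^{-d}$ suffice reflects the relative Serre equivalence $\mathbb{S}=\Sigma^{-d}$ from (RS1) together with the reduction $X\cong X_{\leq 0}$ in $\mathcal{T}/\mathcal{T}^p$ from Lemma~\ref{lemma_iso_quotient_cat}: the former constrains the admissible amplitude to $d+1$, while the latter lets us assume the representative lies in $\mathcal{T}^{\leq 0}$ to begin with. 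The main obstacle I expect is essentially bookkeeping: one must carefully translate between Iyama--Yang's positive-CY conventions (where the thick subcategory $\mathcal{T}^{fd}$ and the shifts go in the opposite direction) and our negative-CY setup, so as to ensure that the filtration length is exactly $d+1$ and that the shifts appear with the correct sign. I do not anticipate any genuinely new categorical argument beyond this translation and the Serre-duality bound.
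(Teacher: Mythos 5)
Your overall strategy matches the paper's: verify Iyama--Yang's conditions (T0)--(T2) and apply \cite[Lemma 3.1]{IYa}. But two points in your sketch need repair. First, the input to that lemma is not ``a $t$-structure and a co-$t$-structure''; it is the pair $\mathcal{X}=\mathcal{T}_{>0}$, $\mathcal{Y}=\mathcal{T}_{\leq 0}\cap\mathcal{T}^p$, from which $\mathcal{Z}=\mathcal{X}^{\perp}\cap{}^{\perp}(\Sigma\mathcal{Y})$ is formed. More importantly, your sketch of (T2) --- via $\mathcal{T}_{>0}\subset\mathcal{T}^p$ and \cite[Lemma 4.9]{JH} --- does not establish what (T2) actually demands, namely the existence of the co-$t$-structure $\mathcal{T}={}^\perp\mathcal{Y}\perp\mathcal{Y}$. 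The real work is to use the restricted Serre duality (RS1) to prove $\mathcal{Y}=\mathbb{S}^{-1}(\Sigma^{<0}\mathcal{S})^\perp$, and then obtain the desired co-$t$-structure by transporting the one from (RS2) along $\mathbb{S}^{-1}$. You can outsource this check to Jin, as you note, but your own account of how it would go is not correct.

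Second, the step you call ``essentially bookkeeping'' is precisely where the amplitude $d+1$ and the shifts come from, and it is not carried out in your proposal. After Iyama--Yang yields $\mathcal{T}/\mathcal{T}^p\cong\mathcal{Z}$, one must compute
\begin{align*}
{}^\perp\mathcal{Y}=\mathbb{S}^{-1}(\Sigma^{\geq 0}\mathcal{S})^\perp=\Sigma^d(\Sigma^{\geq 0}\mathcal{S})^\perp=\mathcal{T}^{>-d},
\end{align*}
deduce $\mathcal{Z}=\mathcal{X}^\perp\cap{}^\perp(\Sigma\mathcal{Y})=\mathcal{T}^{\leq 0}\cap\mathcal{T}^{\geq -d}$, invoke \cite[1.3.12]{BBD} to identify this intersection with $\Sigma^d\langle\mathcal{S}\rangle*\cdots*\Sigma\langle\mathcal{S}\rangle*\langle\mathcal{S}\rangle$, and finally re-index by $\Sigma^{-d}$ to reach $\langle\mathcal{S}\rangle*\Sigma^{-1}\langle\mathcal{S}\rangle*\cdots*\Sigma^{-d}\langle\mathcal{S}\rangle$. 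Your appeal to $X\cong X_{\leq 0}$ in the quotient (Lemma~\ref{lemma_iso_quotient_cat}) plays no explicit role in the paper's argument; it is the computation of $\mathcal{Z}$, driven by $\mathbb{S}=\Sigma^{-d}$, that produces the correct number of factors and the correct signs.
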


\begin{proof}
We check that the required conditions (T0)-(T2) from \cite[Section 1.2]{IYa} hold:
\begin{enumerate}
\item[(T0)] $\mathcal{T}$ is a triangulated category and $\mathcal{T}^p$ is a thick subcategory,
\item[(T1)] $\mathcal{T}^p$ has a torsion pair $\mathcal{T}^p=\mathcal{X}\perp \mathcal{Y}$,
\item[(T2)] $\mathcal{T}$ has torsion pairs $\mathcal{T}=\mathcal{X}\perp \mathcal{X}^\perp= {}^\perp\mathcal{Y}\perp \mathcal{Y}$.
\end{enumerate}

Condition (T0) holds by construction. Let $\mathcal{X}=\mathcal{T}_{>0}$, $\mathcal{Y}=\mathcal{T}_{\leq 0}\cap \mathcal{T}^p$ and $\mathcal{Z}=\mathcal{X}^{\perp}\cap {}^{\perp}(\Sigma \mathcal{Y})$.
We have that $\mathcal{X}^{\perp}=\mathcal{T}_{>0}^\perp=\mathcal{T}_{\leq 0}$ and so $\mathcal{T}=\mathcal{X}\perp \mathcal{X}^\perp$ is a co-$t$-structure by Remark \ref{remark_cot_structure}. Moreover, for any $X\in\mathcal{T}^p$, consider a truncation triangle of the form
\begin{align*}
\xymatrix{
X_{>0}\ar[r]& X\ar[r]& X_{\leq 0}\ar[r]& \Sigma X_{>0},
}
\end{align*}
where $X_{>0}\in\mathcal{T}_{>0}\subseteq \mathcal{T}^p$ and $X_{\leq 0}\in\mathcal{T}_{\leq 0}$. Since $\mathcal{T}^p$ is closed under extensions, we have that $X_{\leq 0}\in\mathcal{T}^p$. Hence  $\mathcal{T}=\mathcal{X}\perp \mathcal{X}^\perp$ restricts to the co-$t$-structure 
\begin{align*}
\mathcal{T}^p=\mathcal{X}\perp(\mathcal{X}^\perp\cap\mathcal{T}^p)=\mathcal{X}\perp (\mathcal{T}_{\leq 0}\cap \mathcal{T}^p)=\mathcal{X}\perp\mathcal{Y},
\end{align*}
hence (T1) holds. It remains to show the existence of the second torsion pair from (T2). We have that $\mathcal{Y}=\mathcal{T}_{\leq 0}\cap \mathcal{T}^p= {}^{\perp}(\Sigma^{<0}\mathcal{S})\cap \mathcal{T}^p$. Then, for any $Y\in\mathcal{Y}$, by (RS1) we have that
\begin{align*}
D\mathcal{T}(\Sigma^{<0}\mathcal{S}, \mathbb{S}Y)\cong \mathcal{T}(Y,\Sigma^{<0}\mathcal{S})=0.
\end{align*}
Hence $\mathbb{S} Y\in (\Sigma^{<0}\mathcal{S})^{\perp}$ and $\mathcal{Y}\subseteq \mathbb{S}^{-1}(\Sigma^{<0}\mathcal{S})^\perp$. Moreover, we have that $(\Sigma^{<0}\mathcal{S})^\perp\subseteq \mathcal{T}^p$ by (RS2) and so, using (RS1) one can check that $\mathbb{S}^{-1}(\Sigma^{<0}\mathcal{S})^\perp\subseteq \mathcal{T}^p$. Now, for any $X\in\mathbb{S}^{-1}(\Sigma^{<0}\mathcal{S})^\perp$, we have that $\mathbb{S}X\in(\Sigma^{<0}\mathcal{S}^\perp)\subseteq \mathcal{T}^p$ and
\begin{align*}
\mathcal{T}(X,\Sigma^{<0}\mathcal{S})\cong D\mathcal{T}(\Sigma^{<0}\mathcal{S},\mathbb{S}X)=0
\end{align*}
 by (RS1), so that $X\in{}^\perp(\Sigma^{<0}\mathcal{S})$ and $\mathbb{S}^{-1}(\Sigma^{<0}\mathcal{S})^\perp\subseteq{}^\perp(\Sigma^{<0}\mathcal{S})$. Hence $\mathcal{Y}=\mathbb{S}^{-1}(\Sigma^{<0}\mathcal{S})^\perp$. Using the second co-$t$-structure from (RS2), we then conclude that $\mathcal{T}={}^\perp \mathcal{Y}\perp \mathcal{Y}$
is a co-$t$-structure and (T2) holds.

Since conditions (T0)-(T2) are satisfied, we apply \cite[Lemma 3.1]{IYa} and conclude that for any $X$ in $\mathcal{T}$, there is an object $Z$ in $\mathcal{Z}$ such that $X\cong_{\mathcal{T}/\mathcal{T}^p} Z$. Note that, since $\mathbb{S}=\Sigma^{-d}$ by assumption, we have that
\begin{align*}
{}^\perp \mathcal{Y}=\mathbb{S}^{-1}(\Sigma^{\geq 0}\mathcal{S})^{\perp}=\Sigma^d(\Sigma^{\geq 0}\mathcal{S})^{\perp}=\mathcal{T}^{>-d}.
\end{align*}
Recalling that $\mathcal{T}_{\leq 0}=\mathcal{T}^{\leq 0}$ by Remark \ref{remark_cot_structure}, we have that
\begin{align*}
\mathcal{Z}=\mathcal{X}^{\perp}\cap {}^\perp(\Sigma\mathcal{Y})=\mathcal{T}^{\leq 0}\cap \mathcal{T}^{\geq -d}.
\end{align*}
Moreover, by \cite[Proposition 1.3.13]{BBD}, we have that
\begin{align*}
\mathcal{T}^{\leq 0}\cap \mathcal{T}^{\geq -d}=
 \Sigma^d\langle \mathcal{S}\rangle * \Sigma^{d-1}\langle\mathcal{S}\rangle*\dots* \Sigma\langle\mathcal{S}\rangle* \langle\mathcal{S}\rangle.
\end{align*} 
Hence, we conclude that
\begin{align*}
\mathcal{T}/\mathcal{T}^p&\cong\mathcal{T}^{\leq 0}\cap \mathcal{T}^{\geq -d} \\&\cong
\Sigma^d\langle \mathcal{S}\rangle * \Sigma^{d-1}\langle\mathcal{S}\rangle*\dots* \Sigma\langle\mathcal{S}\rangle* \langle\mathcal{S}\rangle\\
&\cong
\langle \mathcal{S}\rangle * \Sigma^{-1}\langle\mathcal{S}\rangle*\dots* \Sigma^{1-d}\langle\mathcal{S}\rangle* \Sigma^{-d} \langle\mathcal{S}\rangle.\\
\end{align*}
\end{proof}

\begin{theorem}\label{coro_SMS}
We have that $\mathcal{S}$ is a $(d+1)$-SMS in $\mathcal{T}/\mathcal{T}^p$.
\end{theorem}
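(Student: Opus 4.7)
The plan is to verify the three defining properties of a $(d+1)$-SMS from Definition \ref{defn_SMS} one by one, and to observe that each has essentially been established in the preceding lemmas and corollaries. So the proof will be a short assembly argument rather than a new calculation.

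First, property (1) of Definition \ref{defn_SMS} requires $\dim_k \mathcal{T}/\mathcal{T}^p(X,Y) = \delta_{XY}$ for all $X,Y \in \mathcal{S}$. This is exactly the content of Lemma \ref{lemma_prop(1)} once we recall that $\mathcal{S} = \add(S)$ and $S = S_1 \oplus \cdots \oplus S_m$ is the indecomposable decomposition, so the indecomposables of $\mathcal{S}$ are precisely (up to isomorphism) the summands $S_i$. I would simply cite Lemma \ref{lemma_prop(1)}.

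Next, property (2) requires, for $n = d+1 \geq 2$, that $\mathcal{T}/\mathcal{T}^p(\Sigma^m X, Y) = 0$ for $1 \leq m \leq d$ and all $X, Y \in \mathcal{S}$. Since $\mathcal{S} = \add(S)$, both $X$ and $Y$ are direct summands of a finite direct sum of copies of $S$, and Hom in $\mathcal{T}/\mathcal{T}^p$ is additive. So it suffices to show $\mathcal{T}/\mathcal{T}^p(\Sigma^m S, S) = 0$ for $1 \leq m \leq d$, which is precisely Corollary \ref{coro_prop(2)} (with $j = m$).

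Finally, property (3) requires $\mathcal{T}/\mathcal{T}^p = \langle \mathcal{S}\rangle * \Sigma^{-1}\langle\mathcal{S}\rangle * \cdots * \Sigma^{-d}\langle\mathcal{S}\rangle$, which is exactly the conclusion of Lemma \ref{lemma_prop(3)}. Assembling the three citations completes the verification. The main conceptual obstacle was already handled in Lemma \ref{lemma_prop(3)}, where the Iyama--Yang lemma \cite[Lemma 3.1]{IYa} was applied after checking (T0)--(T2); the present theorem is just the packaging step that puts the pieces together, so no new technical difficulty arises.
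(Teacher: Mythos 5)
Your proposal is correct and follows essentially the same route as the paper: the theorem is proved by citing Lemma \ref{lemma_prop(1)}, Corollary \ref{coro_prop(2)}, and Lemma \ref{lemma_prop(3)} for the three defining properties of a $(d+1)$-SMS. The extra remarks on additivity and $\mathcal{S}=\add(S)$ are sensible justifications that the paper leaves implicit.
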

\begin{proof}
The three properties from Definition \ref{defn_SMS} correspond respectively to Lemma \ref{lemma_prop(1)}, Corollary \ref{coro_prop(2)} and Lemma \ref{lemma_prop(3)}.
\end{proof}

\end{document}